\newtheorem{thm}{Theorem}[section]
\newtheorem{cor}[thm]{Corollary}
\theoremstyle{definition}
\newtheorem{definition}[thm]{Definition}
\theoremstyle{remark}
\title{\large \bf Acyclicity for Groups and Vector Spaces
\thanks{\textit{ Key Words}: Acyclic matching property, Linear matching property, Torsion-free groups.}
\thanks { {\it AMS Mathematics Subject Classification (2000):}  52B40, 90C27, 20B05}}
\author{{\normalsize{\sc M. ALIABADI${}^{\mathsf{a,1}}$},{\sc H. JOLANY${}^{\mathsf{b,2}}$},{\sc M. AMIN KHAJEHNEJAD${}^{\mathsf{c,3}}$},}\\
\normalsize{{\sc M. J. MOGHADDAMZADEH${}^{\mathsf{d,4}}$},{\sc H. SHAHMOHAMAD${}^{\mathsf{e,5}}$}}\\
\vspace{3mm}\\
{\footnotesize{${}^{\mathsf{a}}$\it Department of Mathematics, Statistics, and Computer Science, University of Illinois, }}\\[-3mm]
 {\footnotesize{851 S. Morgan St, Chicago, IL 60607, USA}}\\
{\footnotesize{${}^{\mathsf{b}}$\it Universit\'e des Sciences et Technologic de Lille UFR de Math\'ematiques, Laboratoire Paul Painlev\'e,}}\\[-3mm]
{\footnotesize{ CNRS-UMR 8524 59655 Villeneuve d'Ascq Cedex/France}}\\
{\footnotesize{${}^{\mathsf{c}}$\it Department of  Electrical Engineering, Califonia Institute of Technology, Pasadena CA 91125}}\\
{\footnotesize{${}^{\mathsf{d}}$\it Department of Mathematical Sciences, Sharif University of Technology,  P. O. Box 11365-9415, }}\\[-3mm]
{\footnotesize{Tehran, IRAN}}\\
{\footnotesize{${}^{\mathsf{e}}$\it School of Mathematical Sciences, RIT, Rochester, NY 14623, USA}}\\
{\footnotesize{$^1$E-mail address: $\mathsf{maliab2@uic.edu}$}}\\
{\footnotesize{$^2$E-mail address: $\mathsf{hassan.jolany@math.univ-lille1.fr}$}}\\
{\footnotesize{$^3$E-mail address: $\mathsf{amin@caltech.edu}$}}\\
{\footnotesize{$^4$E-mail address: $\mathsf{javad\_mz123@yahoo.com}$}}\\
{\footnotesize{$^5$E-mail address: $\mathsf{hxssma@rit.edu}$}}}
\date{}
\begin{document}

\maketitle
\begin{abstract}
\noindent
A \textit{matching} in an Abelian group $G$ is a bijection $f$ from a subset $A$ to a subset $B$ in $G$ such that $a+f(a) \not\in A$, for all $a\in A$. This notion was introduced by Fan and Losonczy who used matchings in $\mathbb{Z}^n$ as a tool for studying an old problem of Wakeford concerning canonical forms for symmetric tensors. The notion of  \textit{acyclic matching property} was provided by Losonczy and it was proved that torsion-free groups admit this property.  In this paper, we introduce a duality of acyclic matching  as a tool for classification of some Abelian groups; moreover,  we study matchings for vector spaces and give a connection between matchings in groups  and vector spaces. Our tools mix additive number theory, combinatorics and algebra.
\end{abstract}
\vspace{9mm}
\section{Introduction}
Let $G$ be a group and $A$ and $B$ be two non-empty subsets of $G$. If $f: A\to B$ is a matching, we define $m_f: G\to \mathbb{Z}\cup \{\infty\}$ by $m_f(x)=\#\{a\in A:\; a+f(a)=x\}$. A matching  $f$ is called  acyclic if for any matching $g: A\to B$ with $m_f=m_g$, we have $f=g$. A group $G$ possesses the \textit{finite matching property}
if for every pair $A$ and $B$ of non-empty finite subsets satisfying $\# A=\# B$ and $0\not\in B$, there exists at least one matching from $A$ to $B$. Furthermore, $G$ possesses the \textit{finite acyclic matching property}, if for every pair $A$ and $B$ of non-empty finite subsets satisfying $\# A=\# B$ and $0\not\in B$, there exists at least one acyclic matching from $A$ to $B$. We say $G$ \textit{fails to have the acyclic matching property at order} $m\in \mathbb{N}\cup\{\infty\}$, if there exist subsets $A$ and $B$ of $G$ and matchings $f,g:A\to B$ such that $\# A=\# B=m$, $f\neq g$ and $m_f=m_g$.

Let $A$ be a subset of $\mathbb{Z}_p$ and $f: A\to A$ be a bijection, where $p$ is prime. Then $\mathrm{ord}_f(a)$ denotes the minimum positive integer $n$ for which $f^n(a)=a$, where $a\in A$. Losonczy in \cite{10} proved the following theorems:

\noindent
\begin{thm}
If $G$ is an Abelian group, then $G$ has the finite matching property if and only if $G$ is torsion-free or cyclic of prime order.
\end{thm}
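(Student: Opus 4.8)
The statement is a biconditional, so the plan is to prove the two implications separately, and for the ``only if'' direction I would argue by contraposition.

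I would treat the \emph{failure} direction first, since it is the cleaner one: if $G$ is neither torsion-free nor cyclic of prime order, the finite matching property fails. The key observation I would isolate is that a proper finite subgroup is a total obstruction. Precisely, if $H \le G$ is a proper finite subgroup with $|H| \ge 2$, set $A = H$ and $B = (H \setminus \{0\}) \cup \{z\}$ for any $z \in G \setminus H$. Then $|A| = |B| = |H|$ and $0 \notin B$, yet no matching $f : A \to B$ exists: since $f$ is onto and $H \setminus \{0\} \subseteq B$ is nonempty, some $a \in A = H$ has $f(a) = h \in H \setminus \{0\}$, whence $a + f(a) = a + h \in H = A$, violating the matching condition. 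It then remains to check that such a proper finite $H$ exists exactly when $G$ is neither torsion-free nor $\cong \mathbb{Z}_p$: a torsion element $g$ of order $d \ge 2$ gives $H = \langle g\rangle$ of order $d$, which is proper unless $G = \langle g\rangle$ is finite cyclic, and if $G = \mathbb{Z}_d$ is finite cyclic with $d$ composite, then a prime divisor of $d$ yields a proper subgroup of order $\ge 2$.

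For the ``if'' direction I would give a unified argument covering both the torsion-free case and the prime cyclic case via Hall's marriage theorem. Given $A, B$ with $|A| = |B| = n$ and $0 \notin B$, form the bipartite graph joining $a \in A$ to $b \in B$ exactly when $a + b \notin A$; a matching in the sense of the paper is precisely a perfect matching here. By Hall's theorem it suffices to show $|N(S)| \ge |S|$ for every $S \subseteq A$. Writing $T = B \setminus N(S) = \{b \in B : S + b \subseteq A\}$, so that $S + T \subseteq A$, Hall's condition becomes $|S| + |T| \le n$. I would then invoke the appropriate sumset bound: the Cauchy--Davenport inequality $|S + T| \ge \min(p, |S| + |T| - 1)$ in $G = \mathbb{Z}_p$, and the elementary chain bound $|S + T| \ge |S| + |T| - 1$ in an ordered group (a torsion-free abelian group being orderable). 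Combined with $|S + T| \le n$, this forces $|S| + |T| \le n + 1$, with the only dangerous case being the extremal one $|S| + |T| = n + 1$, which saturates the sumset inequality.

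The crux, and the step I expect to be the main obstacle, is exactly this extremal case. Here I would apply the rigidity results for equality in the sumset bounds---Vosper's theorem modulo $p$, and the (easier) statement that equality in the ordered-group bound forces $S$ and $T$ to be arithmetic progressions sharing a common difference $d$. With $A = S + T$ then also an arithmetic progression of difference $d$, the containment $S \subseteq A$ forces the base point of $T$ to be an integer multiple of $d$ lying in a range that puts $0 \in T \subseteq B$, contradicting $0 \notin B$; this is where the hypothesis $0 \notin B$ is finally used. I would then dispatch the boundary cases left uncovered by the rigidity theorems---$|S| \le 1$ or $|T| \le 1$, the full-group case $n = p$ (which immediately forces $0 \in B = \mathbb{Z}_p$), and the Vosper edge cases $|S + T| \in \{p-1, p\}$---by direct arguments, each again extracting a violation of $0 \notin B$ or of finiteness. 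This establishes Hall's condition, hence the existence of a matching, completing the ``if'' direction.
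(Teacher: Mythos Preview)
The paper does not supply a proof of this statement: Theorem~1.1 is quoted from Losonczy~\cite{10} without argument, so there is no proof in the present paper to compare your attempt against.

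That said, your outline is sound and is in the spirit of the original source. The failure direction via a proper nontrivial finite subgroup $H$ (with $A=H$ and $B=(H\setminus\{0\})\cup\{z\}$) is exactly the standard obstruction and works as written. For the existence direction, the Hall's-theorem reduction to a sumset estimate for $T=\{b\in B: S+b\subseteq A\}$, handled by Cauchy--Davenport in $\mathbb{Z}_p$ and by the ordered-group inequality $|S+T|\ge|S|+|T|-1$ in the torsion-free case, together with Vosper/AP-rigidity for the extremal configuration $|S|+|T|=|A|+1$, is the known route. The only places to be careful are the boundary cases excluded by Vosper (\,$|S|\le1$, $|T|\le1$, or $|S+T|\ge p-1$\,); each of these does collapse to $0\in T\subseteq B$ after a short direct check, as you indicate. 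In particular, when $|S+T|=|A|=p-1$ one has $|S|+|T|=p$, so $T=\mathbb{Z}_p\setminus(x-S)$ for the unique $x\notin A$, and $S\subseteq A$ gives $x\notin S$, hence $0\in T$.
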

\noindent
\begin{thm}
 If $G$ is an Abelian torsion-free group, then $G$ has the finite acyclic matching property.
\end{thm}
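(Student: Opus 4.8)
The plan is to exploit the fact that a torsion-free Abelian group is orderable and to single out a canonical \emph{extremal} matching whose multiset of sums cannot be produced by any competitor. First I would reduce to a concrete ordered group. Since $A$ and $B$ are finite they lie in the finitely generated subgroup $\langle A\cup B\rangle$, which, being torsion-free, is isomorphic to $\mathbb{Z}^{r}$; fixing a lexicographic (hence translation-invariant) total order $<$ makes $G$ ordered for the purposes of the problem. One may push this further: choosing an integer linear functional $\phi\colon\mathbb{Z}^{r}\to\mathbb{Z}$ that is injective on the finite set $A\cup B\cup(A+B)\cup\{0\}$, the hypotheses ($\#A=\#B$, $0\notin B$), the matching condition $a+f(a)\notin A$, and the counting function $m_{f}$ all transfer faithfully between $(A,B)$ and $(\phi(A),\phi(B))$, so it suffices to prove the statement in $\mathbb{Z}$. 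By the preceding theorem torsion-free groups enjoy the finite matching property, so the set of matchings $A\to B$ is nonempty and finite, and an extremal element exists.

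Next I would introduce the extremal object. To each matching $f$ associate the sequence of its sums $a+f(a)$ listed in decreasing order, and let $f^{*}$ be a matching whose sum-sequence is lexicographically largest. The key structural remark, and the point where the order enters, is that any pair $(a,b)$ with $a+b>\max A$ is automatically admissible, since then $a+b\notin A$; symmetrically $a+b<\min A$ is admissible. Hence the genuinely constrained sums are confined to the bounded window $[\min A,\max A]$, so improving moves that raise sums tend to leave the constrained region and stay admissible. The goal is then to show that the lexicographically largest sum-multiset is attained by the single matching $f^{*}$, which immediately yields acyclicity.

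To establish this uniqueness I would compare $f^{*}$ with a hypothetical competitor $g\neq f^{*}$ satisfying $m_{g}=m_{f^{*}}$. Viewing both as bijections $A\to B$, the permutation $(f^{*})^{-1}g$ decomposes the configuration into alternating cycles; on a nontrivial cycle $a^{(1)},b^{(1)},\dots,a^{(k)},b^{(k)}$ with $f^{*}(a^{(j)})=b^{(j)}$ and $g(a^{(j)})=b^{(j-1)}$, the equality $m_{g}=m_{f^{*}}$ localizes to the multiset identity
\[
\{a^{(j)}+b^{(j)}\}_{j=1}^{k}=\{a^{(j)}+b^{(j-1)}\}_{j=1}^{k}.
\]
If the restriction of $f^{*}$ to this cycle contains an inversion, two indices with $a^{(p)}<a^{(q)}$ but $b^{(p)}>b^{(q)}$, then the rearrangement (isotone) swap strictly raises the largest sum on the cycle, producing a matching with a lexicographically larger sum-sequence and contradicting the maximality of $f^{*}$. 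The main obstacle is precisely that such a swap must itself remain a valid matching: the exchanged pairs must still avoid $A$, and this is where torsion-freeness is indispensable. When the raised sums land above $\max A$ they are admissible for free, and the boundedness of the constrained window together with the translation-invariance of $<$ is exactly what lets one route the improving exchange through the admissible region. The delicate case, and the crux of the whole argument, is a coincidence of sums forced to lie inside $[\min A,\max A]$, which must be excluded by a finer extremal choice (for instance breaking ties by also maximizing the domain-coordinate of the top pair), so that no admissible improving rearrangement survives and $g=f^{*}$ is forced.
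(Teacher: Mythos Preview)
The paper does not prove this theorem at all; it is quoted from Losonczy~\cite{10} as background. So there is no in-paper argument to compare against. Your overall strategy---reduce to $\mathbb{Z}$ via a generic linear functional on the finitely generated hull and then single out an extremal matching with respect to the order---is indeed the philosophy behind Losonczy's proof, so you are pointed in the right direction.

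However, your execution contains a genuine error. The assertion that ``the equality $m_{g}=m_{f^{*}}$ localizes to the multiset identity $\{a^{(j)}+b^{(j)}\}_{j}=\{a^{(j)}+b^{(j-1)}\}_{j}$'' on each cycle of $(f^{*})^{-1}g$ is false. Take $A=\{0,1,2,3\}$, $B=\{10,11,12,13\}$ in $\mathbb{Z}$ and the matchings $f\colon 0\mapsto13,\ 1\mapsto10,\ 2\mapsto11,\ 3\mapsto12$ and $g\colon 0\mapsto11,\ 1\mapsto12,\ 2\mapsto13,\ 3\mapsto10$. Both have sum multiset $\{11,13,13,15\}$, hence $m_f=m_g$, yet $(f)^{-1}g$ decomposes into the two transpositions $(0\ 2)$ and $(1\ 3)$, and on the cycle $\{0,2\}$ the $f$-sums are $\{13,13\}$ while the $g$-sums are $\{11,15\}$. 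Global equality of multiplicity functions simply does not descend to individual cycles; your inversion/swap contradiction is built on this false localization and therefore does not stand.

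You also concede the second gap yourself: the rearrangement swap must remain a \emph{matching}, and you handle only the easy case where the raised sum leaves the interval $[\min A,\max A]$, deferring the rest to an unspecified ``finer extremal choice.'' That is the heart of the matter, not a technicality. The way Losonczy closes both gaps is to order the \emph{domain} first: write $A=\{a_{1}>a_{2}>\cdots>a_{n}\}$ and maximize the vector $(f(a_{1}),\ldots,f(a_{n}))$ lexicographically over all matchings. One then compares $f^{*}$ with a competitor $g$ satisfying $m_{g}=m_{f^{*}}$ at the \emph{first} index $i$ where they differ; the global equality of multisets (not a cycle-local one) forces the existence of some $j>i$ with $a_{j}+g(a_{j})=a_{i}+f^{*}(a_{i})$, and this equation is exactly what certifies that the relevant swap stays admissible. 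Recasting your argument along those lines would make it go through.
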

For more results on matchings see [2,4,5,6,7,8,10,11 and 13]. Also, the interested reader is referred to [12] to see more details on Wakeford's problem. Here, we prove the following theorem as a connection between acyclic matching property and its duality.

\noindent
\begin{thm}
 Let $G$ be an Abelian group and $G\neq \mathbb{Z}_2, \mathbb{Z}_3, \mathbb{Z}_5$. If $G$ has  the finite acyclic matching property, then it fails to have  the acyclic matching property at order $m$, for some $m\in \mathbb{N}\cup\{\infty\}$.
\end{thm}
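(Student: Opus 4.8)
The plan is to first pin down the structure of $G$ from the hypothesis, and then to exhibit an explicit failure of acyclicity at order $3$. Since every acyclic matching is in particular a matching, the finite acyclic matching property forces the finite matching property, so by Theorem~1.1 the group $G$ must be either torsion-free or cyclic of prime order $\mathbb{Z}_p$. Combined with the assumption $G \neq \mathbb{Z}_2, \mathbb{Z}_3, \mathbb{Z}_5$ (and discarding the degenerate trivial group, which admits no admissible pair $(A,B)$), this leaves exactly two cases to treat: (i) $G$ is a nontrivial torsion-free group, and (ii) $G = \mathbb{Z}_p$ with $p \geq 7$. I expect essentially all the real work to be in locating a single configuration that handles both cases at once, with the finiteness of $G$ in case (ii) being the only genuinely restrictive feature.

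Next I would write down two distinct matchings on a three-element set with identical multiplicity functions. Choose $d \in G$ to be an element of infinite order in case (i) and $d = 1$ in case (ii), and put $A = \{0, d, 2d\}$ and $B = \{u, u+d, u-d\}$ for a parameter $u$ to be fixed later. Define $f, g : A \to B$ by
\[
f\colon 0\mapsto u,\; d\mapsto u+d,\; 2d\mapsto u-d,\qquad
g\colon 0\mapsto u+d,\; d\mapsto u-d,\; 2d\mapsto u.
\]
A direct computation shows that the sums $a+f(a)$ and the sums $a+g(a)$ both run over the multiset $\{u,\,u+d,\,u+2d\}$, so that $m_f = m_g$; meanwhile $f(0) = u \neq u+d = g(0)$ shows $f \neq g$. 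Thus, provided $f$ and $g$ are honest matchings, we will have exhibited the desired collision at order $m = 3$.

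It then remains only to choose $u$ so that none of the three sums lies in $A$, and here is where the excluded groups enter. Unwinding the three conditions $u,\,u+d,\,u+2d \notin \{0,d,2d\}$ collapses to the single requirement $u \notin \{-2d,\,-d,\,0,\,d,\,2d\}$, a forbidden set of at most five elements. In case (i) the subgroup $\langle d\rangle \cong \mathbb{Z}$ is infinite and $u = 3d$ works; in case (ii) the forbidden set $\{-2,-1,0,1,2\}$ has exactly five elements, and $p \geq 7$ leaves at least two admissible values of $u$. The distinctness of the three elements of $A$ and of $B$, together with $0 \notin B$, follows at once from $d \neq 0$ and $2d \neq 0$. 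The main obstacle is therefore not any deep structural argument but the bookkeeping of this forbidden set: the construction degenerates precisely for $\mathbb{Z}_2, \mathbb{Z}_3$ (where $A$ cannot even be formed as a $3$-element set) and for $\mathbb{Z}_5$ (where $\{-2d,\dots,2d\}$ exhausts the group), which is exactly why those three groups are the stated exceptions. I would close by noting that one cannot improve on order $3$, since orders $1$ and $2$ never support two distinct matchings with equal multiplicity function, confirming that order $m=3$ is the correct and sharp target.
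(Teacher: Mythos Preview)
Your argument is correct, and it takes a genuinely different route from the paper after the common first step. Both proofs begin by observing that the finite acyclic matching property implies the finite matching property, so Theorem~1.1 forces $G$ to be torsion-free or $G \cong \mathbb{Z}_p$ with $p \geq 7$. From there the paper bifurcates: for torsion-free $G$ it invokes Corollary~4.5, which produces failure at order $\infty$ via separate infinite-coset constructions in the divisible and non-divisible cases (Theorems~4.1 and~4.4, the latter resting on the structure theorem for divisible torsion-free groups); for $\mathbb{Z}_p$ it cites Theorem~2.1, which uses quadratic residues to obtain failure at order $(p-1)/2$. Your single three-element configuration $A=\{0,d,2d\}$, $B=\{u-d,u,u+d\}$ handles both cases at once, avoiding all of the structure theory and the number theory, and moreover lands on the minimal possible order~$3$. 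The paper's approach, on the other hand, extracts more: order~$\infty$ in the torsion-free case, and (via Theorem~3.1, whose closing paragraph also contains an order-$3$ example $A=\{1,2,4\}$ for $\mathbb{Z}_p$) every order $3 \leq k \leq p-3$ for $\mathbb{Z}_p$. For the theorem as stated, however, your uniform order-$3$ construction is entirely sufficient and arguably cleaner.
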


\section{ Acyclic matching in a special case for some cyclic groups}
In the following theorem, we show that $\mathbb{Z}_p$ fails to have the acyclic matching property at order $\dfrac{p-1}{2}$ for $p>5$.

\noindent
\begin{thm}
 Let $p>5$ be a prime. Then $\mathbb{Z}_p$ has the cyclic matching property of order $\dfrac{p-1}{2}$.
\end{thm}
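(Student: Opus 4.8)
The plan is to exhibit, for every prime $p>5$, an explicit pair of distinct matchings $f,g\colon A\to B$ with $\#A=\#B=\frac{p-1}{2}$ and $m_f=m_g$, thereby witnessing the failure of the acyclic matching property at order $\frac{p-1}{2}$. The natural candidate for a set of this size is $Q$, the set of nonzero quadratic residues modulo $p$, and I would take $A=B=Q$; note $\#Q=\frac{p-1}{2}$ and $0\notin Q$, as required. Writing $N$ for the set of quadratic non-residues and $\chi$ for the Legendre symbol, the structural fact I would exploit is that $Q$ is a subgroup of $\mathbb{Z}_p^\times$ of index $2$, so $cQ=Q$ when $\chi(c)=1$ and $cQ=N$ when $\chi(c)=-1$.

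For each $\lambda\in Q$ I would consider the multiplication map $f_\lambda(a)=\lambda a$, which is a bijection of $Q$ onto itself, with associated sum $a+f_\lambda(a)=(1+\lambda)a$; thus the multiset of sums of $f_\lambda$ is exactly $(1+\lambda)Q$. I would then restrict to those $\lambda\in Q$ with $1+\lambda\in N$. For such $\lambda$ one has $(1+\lambda)Q=N$, which is disjoint from $Q$, so $a+f_\lambda(a)\notin Q=A$ for all $a$ and $f_\lambda$ is genuinely a matching; moreover its sum multiset equals $N$ \emph{independently of which such $\lambda$ was chosen}. This is the crucial point: any two admissible $\lambda$ yield matchings with the identical counting function $m_{f_\lambda}$.

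To produce two \emph{distinct} such maps I would pass from $\lambda$ to $\lambda^{-1}$. Since $\chi(\lambda^{-1})=1$, one has $\chi(1+\lambda^{-1})=\chi(\lambda^{-1})\chi(1+\lambda)=\chi(1+\lambda)$, so $1+\lambda\in N$ forces $1+\lambda^{-1}\in N$ as well; hence $f_{\lambda^{-1}}$ is also a matching with the same sum multiset $N$. As long as $\lambda\neq\pm1$ we have $f_\lambda\neq f_{\lambda^{-1}}$ (evaluate at $1\in Q$), and the case $\lambda=-1$ is automatically excluded since it would give $1+\lambda=0\notin N$. So it remains only to find a single $\lambda\in Q\setminus\{1\}$ with $1+\lambda\in N$.

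The last step is the main obstacle, and I would settle it by a standard quadratic character-sum count. Writing $N(+,-)=\#\{x:\chi(x)=1,\ \chi(x+1)=-1\}$ and expanding the indicator $\tfrac{1+\chi(x)}{2}\cdot\tfrac{1-\chi(x+1)}{2}$ over $x\neq 0,-1$, the principal terms reduce to the quadratic character sum $\sum_{x}\chi\bigl(x(x+1)\bigr)=\sum_x\chi(x^2+x)=-1$, which has nonzero discriminant. Combining this with $\sum_{x}\chi(x)$ and $\sum_x\chi(x+1)$ over the same range yields $N(+,-)=\frac{p-\chi(-1)}{4}$, which is $\geq 2$ for every $p>5$. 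Since at most one of these solutions can be $\lambda=1$, at least one admissible $\lambda\neq 1$ exists, completing the construction. For orientation, at $p=7$ one gets $\lambda=2$ and $\lambda^{-1}=4$, with both matchings sending $Q=\{1,2,4\}$ onto the sum-multiset $N=\{3,5,6\}$.
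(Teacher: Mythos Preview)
Your construction is identical to the paper's: both take $A=B=Q$, the nonzero quadratic residues, and use multiplication maps $f_\lambda(a)=\lambda a$ with $\lambda\in Q$ and $1+\lambda\in N$, so that the sum multiset is $(1+\lambda)Q=N$ independently of $\lambda$. The only difference is in how the existence of two distinct admissible parameters is justified. The paper simply asserts that one can choose $a\neq b$ with $\chi(a)=\chi(b)=1$ and $\chi(a+1)=\chi(b+1)=-1$, citing a standard number-theory reference. You instead pair $\lambda$ with $\lambda^{-1}$ (equivalently, pair $f_\lambda$ with its inverse), observe $\chi(1+\lambda^{-1})=\chi(1+\lambda)$, and then establish via the character-sum identity $N(+,-)=\frac{p-\chi(-1)}{4}$ that at least one admissible $\lambda\neq1$ exists for $p>5$. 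This is a fuller and self-contained treatment of the same existence step, but not a different method.
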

\noindent
\begin{proof}
 Choose $a$ and $b\in \{1,\ldots,p-1\}$ such that $a\neq b$, $\left(\dfrac{a}{p}\right)=
\left(\dfrac{b}{p}\right)=1$ and $\left(\dfrac{a+1}{p}\right)=\left(\dfrac{b+1}{p}\right)=-1$, where $\bigg( \, \bigg)$ denotes Legendre symbol. See \cite{3} for more results on quadratic residue modulo $p$. Set $A:=\left\{n^2 :\; n\in\mathbb{Z}_p\setminus\{0\}\}\subseteq \mathbb{Z}_p\right.$ and define the bijections $f$ and $g: A\to A$ by $f(n^2)=a n^2$ and $g(n^2)=b n^2$ for any $n\in\mathbb{Z}_p\setminus\{0\}$. Now it is clear that $f$ and $g$ are matchings with $m_f=m_g$. This follows $\mathbb{Z}_p$ fails to have the  acyclic matching property at order $\dfrac{p-1}{2}$ for $p>5$.
\end{proof}

In the next section, we generalize Theorem 2.1 without invoking the results on quadratic residue.

\section{The acyclicity in general case for some cyclic groups}
Let $A\subseteq \mathbb{Z}_p\setminus\{0\}$ and $f: A\to A$ be a bijection. If $a\in A$, then $B=\{f^i(a):\; i\in \mathbb{N}\}$ is invariant under $f$, i.e., $f(B)\subseteq B$. It is clear that there exist $ a_1,\ldots, a_n\in A$ such that $A=\left\{f^i(a_j):\; 1\leq j\leq n,\; i\in\mathbb N\right\}$. Let $A\subseteq \mathbb{Z}_p\setminus\{0\}$ and $f: A\to A$ be a matching for which $f^2\neq id_A$. There exists $a\in A$ with ord$_{f}(a)=m>2$. Now, suppose there exists $b\in A$ such that $b\not\in\left\{f^i(a):\; i\in\mathbb N\right\}$ and define $B=\left\{f^i(b):\; i\in \mathbb{N}\right\}$. Then $f\big|_{A\setminus B}: A\setminus B\to A\setminus B$ is a matching with $f\circ f\big|_{A\setminus B}\neq id_{A\setminus B}$.

 In the following theorem, we show that the torsion groups $\mathbb{Z}_p$ fail to have the acyclic matching property at order $k$, where $2<k<p-2$. It is a remarkable fact that $m_f=m^{-1}_f$, where $f$ is a matching from a non-empty subset $A$ of a group $G$ to $A$ and it is applied in the proof of the next theorem. Also, we already have seen in elementary group theory that  if the distinct cyclic representation of a permutation $\sigma \in S_n$ has a cycle with a length greater rehan 2, then $\sigma\neq \sigma^{-1}$.

\noindent
\begin{thm}
 $\mathbb Z_p$ fails to have the acyclic matching property at order $k$ for $2<k<p-2$, where $p$ is a prime greater than $5$.
\end{thm}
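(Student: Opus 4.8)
The plan is to reduce the statement to a single construction problem and then solve it. The engine is the identity $m_f = m_{f^{-1}}$ recorded above: for \emph{any} matching $f\colon A\to A$ one has $m_f=m_{f^{-1}}$, because substituting $b=f(a)$ turns $a+f(a)$ into $f^{-1}(b)+b$. Two consequences follow immediately. First, $f^{-1}$ is again a matching: since $m_{f^{-1}}=m_f$ is supported off $A$ (as $f$ is a matching, $a+f(a)\notin A$), we get $a+f^{-1}(a)\notin A$ for every $a$, so $f^{-1}\colon A\to A$ is a matching. Second, if $f$ is \emph{not} an involution then $g:=f^{-1}\neq f$, while $m_f=m_g$; together with $\#A=\#A=k$ and $0\notin A$ this is exactly a witness that $\mathbb Z_p$ fails the acyclic matching property at order $k$. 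So everything reduces to: \textit{for each $k$ with $2<k<p-2$ produce a subset $A\subseteq\mathbb Z_p\setminus\{0\}$ with $\#A=k$ carrying a matching $f\colon A\to A$ whose cycle decomposition contains a cycle of length $>2$}; by the elementary permutation fact quoted above, such an $f$ satisfies $f\neq f^{-1}$. (Note this replaces the two quadratic residues $a,b$ of Theorem 2.1 by the single pair $f,f^{-1}$, which is why the residue machinery is no longer needed.)

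For the construction I would build $f$ from one long ``core'' cycle together with enough transpositions to reach the prescribed cardinality, using the deletion principle isolated before the theorem. Transpositions are essentially free: for any $u$ the $2$-cycle $\{u,-u\}$ with $f(u)=-u$, $f(-u)=u$ satisfies the matching condition automatically, since $u+f(u)=0\notin A$. For the core I fix a $3$-cycle $C=\{c_1,c_2,c_3\}$ (to handle odd $k$) or a $4$-cycle (to handle even $k$), chosen so that the consecutive sums $c_i+c_{i+1}$ lie outside $C$; for $p>5$ such a core exists by a direct finite check. Adjoining $t$ disjoint pairs $\{u_i,-u_i\}$ that avoid $C$ and avoid the finitely many core-sums yields a matching on a set of size $3+2t$ (resp. $4+2t$) with the long core cycle intact, and by the deletion principle, deleting these $2$-cycle orbits one at a time realizes every order of the correct parity from the core length up to the maximal size reached. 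Running both parities covers all $k$ in the lower and middle range.

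The main obstacle is the top of the range, $k$ close to $p-2$, where $A$ is almost all of $\mathbb Z_p$ and the constraint $a+f(a)\notin A$ becomes severe: the consecutive and pair sums are forced into the tiny complement $\mathbb Z_p\setminus(A\cup\{0\})$, so rigid $\pm$-pairs waste too many elements (each excluded sum $s$ also kills $-s$), and the naive count stalls a few values short of $p-3$. I would treat this regime by passing from $\pm$-pairs to \emph{flexible} transpositions $\{u,v\}$ with $u+v$ landing in the prescribed small complement, i.e.\ by exhibiting, on $A\setminus C$, a fixed-point-free involution all of whose pair-sums avoid $A$; equivalently one seeks a perfect matching in the graph on $A\setminus C$ joining $u,v$ when $u+v\notin A$, whose existence I would establish by a Hall/counting argument (the complement has size $p-k\ge 3$, giving enough admissible sum-values) or, for the handful of largest values $k=p-3,p-4,\dots$, by writing down the cycle explicitly with $f(x)\in\{-x,\,w_1-x,\,w_2-x,\dots\}$ where $\{0,w_1,\dots\}$ is the complement. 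Checking that this flexible matching still contains a cycle of length $>2$, rather than collapsing to an involution, is the delicate point and the step I expect to absorb most of the work.
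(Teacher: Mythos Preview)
Your reduction is exactly the paper's: use $g=f^{-1}$, note $m_f=m_{f^{-1}}$, and then the whole problem is to exhibit, for each $k$, a $k$-element $A\subseteq\mathbb Z_p\setminus\{0\}$ carrying a self-matching with a cycle of length $>2$. So the engine is right.

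The gap is in the construction strategy. You build from the \emph{bottom up}: fix a small core cycle and adjoin $\pm$-pairs to reach larger $k$. As you yourself observe, this runs into real trouble near the top of the range, because each new pair enlarges $A$ and hence tightens the constraint $a+f(a)\notin A$; you are then left appealing to a Hall-type argument or to unspecified ``explicit'' constructions for $k=p-3,p-4,\dots$, and you flag the non-involution check there as the part that will absorb most of the work. That step is not carried out, so as written the proof is incomplete precisely at the hardest values of $k$.

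The paper sidesteps this difficulty by going \emph{top down}. It writes down, once and for all, an explicit matching on $A=\mathbb Z_p\setminus\{0,1,p-1\}$ (so $k=p-3$) of the shape
\[
f=(4\ p{-}4)(5\ p{-}5)\cdots\Bigl(\tfrac{p-1}{2}\ \tfrac{p+1}{2}\Bigr)(3\ p{-}3\ 2\ p{-}2),
\]
whose pair-sums are all in $\{0,1,p-1\}$ by inspection, and a companion matching on $\mathbb Z_p\setminus\{0,4,p-4,p-1\}$ (so $k=p-4$) with a $5$-cycle core $(3\ p{-}3\ 2\ p{-}2\ 1)$. From each of these one now \emph{deletes} transpositions one at a time; the long core cycle survives every deletion, so $f\neq f^{-1}$ persists, and the two parities together cover every $k$ with $3<k\le p-3$. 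The residual case $k=3$ is $A=\{1,2,4\}$ with $f=(1\ 2\ 4)$. Thus the ``delicate point'' you anticipate never arises: the top-of-range constructions are given outright, and everything else is obtained by removal rather than by adjunction.
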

\begin{proof}
 First, we prove that $\mathbb Z_p$ fails to have the acyclic  matching property at order $p-3$. Set $A:=\mathbb{Z}_p\setminus\left\{0,1,p-1\right\}$ and define $f: A\to A$ by
\[
f=(4\;\; p-4)(5\;\;p-5)\cdots(\dfrac{p-1}{2}\;\;\dfrac{p+1}{2})(3\;\;p-3\;\;2\;\;p-2),
\]
where the notation $(a_1\, a_2\, \ldots\,a_n)$ denotes the permutation of the set $\{a_1,a_2,\ldots,a_n\}$ with $a_i\to a_{i+1}$ $1\leq i\leq n-1$ and $a_n\to a_1$.

Obviously, $f$ is a matching. If $g=f^{-1}$, then $f\neq g$ and $m_f=m_g$. Now, we show that  $\mathbb Z_p$ fails to have the acyclic  matching property at order $p-4$. Let $A=\mathbb{Z}_p\setminus\left\{0,4,p-4,p-1\right\}$. Define $f: A\to A$ by
\[
f=(5\;\; p-5)(6\;\;p-6)\cdots(\dfrac{p-1}{2}\;\;\dfrac{p+1}{2})(3\;\;p-3\;\;2\;\;p-2\;\;1).
\]
Thus $f$ is a matching. Assume that  $g=f^{-1}$, then $f\neq g$ and $m_f=m_g$. This yields $G$ fails to have the acyclic  matching property at order $p-3$ and $p-4$. If we remove the transpositions of the distinct cyclic representation of $f$, then $f$ still remains  a matching on the omitted subsets and if $B_i$'s are the omitted subsets, then $f\big|_{A\setminus B_i}\neq id_{A\setminus B_i}$, for every $i$, $1\leq i\leq n$. Suppose $g_i=\left(f\big|_{A\setminus B_i}\right)^{-1}$, then $f\big|_{A\setminus B_i}\neq g_i$ and $m_{f\big|_{A\setminus B_i}}=m_{g_i}$. Hence $\mathbb Z_p$ fails to have the acyclic  matching property at orders $p-3-2k$ and $p-4-2k'$ for any $1\leq k\leq\frac{p-7}{2}$ and $1\leq k'\leq \frac{p-9}{2}$. Then $\mathbb Z_p$ fails to have the acyclic  matching property at order $k$, for $3<k<p-2$. For $k=3$,  define $A=\left\{1, 2, 4\right\}$ and $f:A\to A$ by $f=( 1\;\; 2\;\; 4)$. Assuming  $g=f^{-1}$ we get the desired result.
\end{proof}

In the last theorem, we showed that $\mathbb{Z}_p$ fails to have the acyclic matching property at order $k$, where $2<k<p-2$. In the following theorem, we study its behavior at order $p-2$.

\noindent
\begin{thm}
 Let $A$ be a subset of $\mathbb Z_p\setminus\{\overline0\}$ and $\# A=p-2$. If $f: A\to A$ is a matching, then $f^2=id_{A}$.
\end{thm}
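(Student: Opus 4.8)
The plan is to exploit the strong constraint that a subset of size $p-2$ imposes. Since $A\subseteq\mathbb{Z}_p\setminus\{0\}$ has $p-2$ elements, it omits $0$ together with exactly one further element, so I would first write $A=\mathbb{Z}_p\setminus\{0,t\}$ for a unique $t\neq 0$. Because the codomain of $f$ is $A$ itself, the defining condition $a+f(a)\notin A$ is then equivalent to $a+f(a)\in\{0,t\}$ for every $a\in A$. This partitions $A$ into the two disjoint pieces $A_0=\{a\in A:\ f(a)=-a\}$ and $A_t=\{a\in A:\ f(a)=t-a\}$, and the goal becomes showing that $A_0$ is empty.

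To do this I would extract a single numerical identity by summing $a+f(a)$ over all of $A$ in two ways. On one side, $f$ is a bijection of $A$, so $\sum_{a\in A}f(a)=\sum_{a\in A}a$ and hence $\sum_{a\in A}(a+f(a))=2\sum_{a\in A}a$; using $\sum_{x\in\mathbb{Z}_p}x=0$ (valid for odd $p$) this equals $2(-t)=-2t$. On the other side, the partition gives $\sum_{a\in A}(a+f(a))=0\cdot\#A_0+t\cdot\#A_t=t\,\#A_t$. Equating the two evaluations yields $t(\#A_t+2)\equiv 0\pmod p$, and since $t\neq 0$ with $p$ prime this forces $\#A_t\equiv -2\equiv p-2\pmod p$.

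The only delicate point is upgrading this congruence to an exact value: as $A_t\subseteq A$ we have $0\le\#A_t\le p-2$, and within this range the residue class of $p-2$ is attained only by $p-2$ itself, so $\#A_t=p-2=\#A$. Consequently $A_0=\emptyset$ and $f(a)=t-a$ for every $a\in A$; that is, $f$ is the restriction to $A$ of the reflection $x\mapsto t-x$, which is an involution, so $f^2=\mathrm{id}_A$. I expect the main obstacle to be recognizing the averaging identity at the outset---once both evaluations of $\sum_{a\in A}(a+f(a))$ are in hand the conclusion is essentially forced, and no case analysis on the cycle structure of $f$ is required.
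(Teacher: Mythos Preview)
Your argument is correct and is genuinely different from the paper's. The paper proceeds by contradiction on the cycle structure of $f$: assuming some orbit has length $m>2$, it observes that the successive sums $f^{i-1}(a)+f^i(a)$ all lie in the two-element complement $\{0,t\}$ and that consecutive ones are distinct (since $f^{i-1}(a)\neq f^{i+1}(a)$), then splits into the cases $m$ even and $m$ odd and derives a contradiction in each. Your approach bypasses any cycle analysis entirely: the single counting identity $\sum_{a\in A}(a+f(a))=2\sum_{a\in A}a=-2t$ together with the partition into $A_0$ and $A_t$ immediately pins down $\#A_t\equiv p-2\pmod p$, and the range constraint finishes it. In fact you prove more than the paper states: not only is $f^2=\mathrm{id}_A$, but $f$ is uniquely determined as $f(a)=t-a$, which also gives a cleaner explanation of Remark~3.3 (uniqueness of the matching at order $p-1$). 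The paper's route has the modest advantage of not needing $p$ to be odd for the vanishing of $\sum_{x\in\mathbb{Z}_p}x$, but since the statement is vacuous for $p=2$ this is immaterial. One small remark on exposition: saying ``the goal becomes showing that $A_0$ is empty'' is a sufficient subgoal rather than an equivalent reformulation, so you might phrase it as ``it suffices to show $A_0=\emptyset$''.
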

\noindent
\begin{proof}
 Let $f^2\neq id_A$ and choose $a\in A$ and the positive integer $m>2$, such that ord$_f(a)=m$. Thus $f^{i-1}(a)+f^i(a)\not\in A$, for each $i$, $1\leq i\leq m$. It is clear that $f^{i-1}(a)+f^i(a)\neq f^i(a)+f^{i+1}(a)$ for any $i$, $1\leq i\leq m$. Suppose $m$ is  even, since $\# A=p-2$ and $A\cap \left\{f^{i-1}(a)+f^i(a):\; 1\leq i\leq m\right\}=\varnothing$, then $f^{i-1}(a)+f^i(a)=f^{i+1}(a)+f^{i+2}(a)$, for any $i=1,\ldots,m-1$. Let us  $a+f(a)=n$ and $f(a)+f^2(a)=n'$, $n=a+f(a)=f^2(a)+f^3(a)=\cdots=f^{m-1}(a)+f^m(a)=n$ and ${n'}=f(a)+f^2(a)=f^3(a)+f^4(a)=\cdots=f^{m-2}(a)+f^{m-1}(a)=f^m(a)+ a$. Therefore, $\displaystyle\sum_{i=1}^mf^i(a)=(m+1)n=(m+1)n'$, so $n=n'$ and it is a contradiction. If $m$ is odd,  there exists $i$, $1\leq i\leq m$ for which $f^{i-1}(a)+f^i(a)=f^i(a)+f^{i+1}(a)$. Since $\#\left\{f^{i-1}(a)+f^i(a):\; 1\leq i\leq m\right\}\leq 2$, therefore $f^2(a)= a$,  which is a contradiction. 
\end{proof}

\noindent
\remark
 There is only one matching $f$ from $\mathbb Z_p\setminus\{0\}$ to $\mathbb Z_p\setminus\{0\}$. Then $\mathbb Z_p$ does not fail to have  the acyclic matching property at order $p-1$.

\section{Acyclicity for  Abelian torsion-free groups}
\noindent
\begin{thm}
 Let $G$ be an Abelian group. If $G$ is non-divisible and torsion-free, then it fails to have the acyclic  matching property at order $\infty$.
\end{thm}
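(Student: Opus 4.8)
The plan is to exploit the two facts singled out just before Theorem~3.1: for any matching $f\colon A\to A$ one has $m_f=m_{f^{-1}}$, and a cycle of length greater than $2$ is never equal to its own inverse. Hence, to witness failure at order $\infty$ it suffices to produce an \emph{infinite} subset $A\subseteq G$ together with a matching $f\colon A\to A$ that contains a cycle of length $>2$. Indeed, $g:=f^{-1}$ is automatically a matching as well (if $b=f^{-1}(a)$ then $a+f^{-1}(a)=f(b)+b=b+f(b)\notin A$), while $f\neq g$ by the cycle--length remark, $m_f=m_g$, and $\#A=\#B=\infty$.

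It remains to build such a pair $(A,f)$. Non-divisibility forces $G$ to be a nontrivial torsion-free group, so it contains an element $x_0$ of infinite order (concretely, non-divisibility supplies a prime $p$ and an $x_0\in G\setminus pG$, but only the infinite order of $x_0$ is used). Working inside $\langle x_0\rangle\cong\mathbb{Z}$, I would take
\[
A:=\{\,kx_0 : k\in\mathbb{Z},\ |k|\ge 2\,\},
\]
and let $f$ be the product of the disjoint $4$-cycles, one for each integer $j\ge 1$,
\[
(2j+1)x_0\ \longmapsto\ -(2j+1)x_0\ \longmapsto\ (2j)x_0\ \longmapsto\ -(2j)x_0\ \longmapsto\ (2j+1)x_0 .
\]
As $j$ runs over $\{1,2,3,\dots\}$ the four-element blocks $\{\pm 2j,\ \pm(2j+1)\}x_0$ are pairwise disjoint and exhaust $A$, so $f$ is a well-defined bijection of $A$ onto itself, each of whose cycles has length $4$.

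The one point needing verification is the matching condition $a+f(a)\notin A$. Inside each block the four consecutive sums are $0,\,-x_0,\,0,\,x_0$, so every value $a+f(a)$ lies in $\{-x_0,0,x_0\}$; since $x_0$ has infinite order these are exactly the three multiples $kx_0$ with $|k|<2$, none of which belongs to $A$. Thus $f$ is a matching with $f\neq f^{-1}$, and $g=f^{-1}$ finishes the proof.

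The step I expect to demand the most care is the bookkeeping just described: checking simultaneously that the $4$-cycles are genuinely disjoint, that they cover $A$, and that all four sums per block avoid $A$. This mirrors the role of the long cycle $(3\; p-3\; 2\; p-2)$ in the proof of Theorem~3.1, the forbidden set $\{-x_0,0,x_0\}$ here playing the part of $\{0,1,p-1\}$ there; passing to an infinite group removes the modular wrap-around, so a single countable family of $4$-cycles suffices. Note that the argument uses only that $x_0$ has infinite order, so the same construction applies to every nontrivial torsion-free group.
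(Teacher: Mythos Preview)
Your proof is correct, but it takes a genuinely different route from the paper's. The paper uses non-divisibility in an essential way: it picks the least $n$ with $2nG\subsetneq G$, chooses $x\in G\setminus 2nG$, and builds two distinct \emph{translation} matchings $f,g\colon 2nG\to 2nG+x$ (splitting into the cases $n>1$ and $n=1$), then shows $m_f=m_g$ by exhibiting a bijection between their level sets. You instead recycle the $f$-versus-$f^{-1}$ trick from Theorem~3.1, working entirely inside a copy of $\mathbb{Z}$ generated by any infinite-order element and taking $A=B$. Your argument is more elementary and, as you note, strictly more general: it uses only that $G$ is torsion-free and nontrivial, so it already covers the divisible case and yields Corollary~4.5 directly, bypassing the structure theorem (Theorem~4.3) that the paper needs for Theorem~4.4. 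What the paper's approach buys is an example with $A\neq B$ and matchings that are not inverses of each other, which is a structurally different witness to failure; your approach buys brevity and unification.
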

\noindent
\begin{proof}
Assume that $n$ is the smallest positive integer such that $2nG\subsetneq G$. We break the proof in to the following cases:\\
Case 1: If $n>1$, then $G=2G$. Choose $x\in G\setminus 2ng$ and let $2nG+x=\{2ng+x:\; g\in G\}$. Define $f, g: 2nG\to 2nG+x$ by $f(2nt)=2nt+x$ and $g(2nt)=2nt+(2n+1)x$, for any $t\in G$. Since $G$ is torsion-free, $f$ and $g$ are matchings. Choose $g_0\in G$ such that $x=2g_0$ and define $A_t=\left\{y\in G:\; 4ny+x=t\right\}$ and $B_t=\left\{y\in G:\; 4ny+(2n+1)x=t\right\}$, for any $t\in G$. So, $\varphi: A_t\to B_t$ with $\varphi(y)=y-g_0$ is a bijection. Now, since $m_f(t)=\# A_t$ and $m_g(t)=\# B_t$, then $m_f=m_g$ and $G$ fails to have the acyclic  matching property.\\
Case 2: If $n=1$, choose $x\in G\setminus 2G$. The bijections $f,g: 2G\to 2G+x$ defined by $f(2t)=2t+x$ and $g(2t)=2t-3x$ are matchings. Define $A_t=\left\{y\in G:\; 4y+x=t\right\}$ and $B_t=\left\{y\in G:\; 4y-3x=t\right\}$ , for any $t\in G$. Hence $\varphi: A_t\to B_t$ by $y\mapsto y+x$ is a bijection. This yields that  $m_f=m_g$ and $G$ fails to have the acyclic  matching property at order $\infty$. 
\end{proof}

\noindent
\example
 For any integer $n$, $n\mathbb Z$ fails to have the acyclic  matching property at order $\infty$.

In the proof of the Theorem 4.4, the following result on divisible torsion-free groups will be used. See \cite{9} for more details.

\noindent
\begin{thm}
 Let $G$ be an Abelian group. If $G$ is divisible and  torsion-free, then it is a direct-sum of isomorphic copies of $\mathbb Q$.
 \end{thm}
By the aforementioned theorem, we can consider $\mathbb{Q}$ as a subset of a group $G$ under the suitable hypotheses on $G$ and we get the  following theorem:

\noindent
\begin{thm}
 Let $G$ be an Abelian group. If $G$ is divisible and torsion-free, then it fails to have the acyclic  matching property at order $\infty$.
\end{thm}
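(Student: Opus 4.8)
The plan is to reduce to a concrete failure that already lives inside a copy of $\mathbb{Z}$ contained in $G$, and then to observe that ``failing the acyclic matching property at order $m$'' is inherited by any overgroup. By Theorem 4.3 a divisible torsion-free Abelian group $G$ is a direct sum of isomorphic copies of $\mathbb{Q}$; in particular, provided $G$ is nontrivial (which is necessary for any failure at an infinite order), $G$ contains a subgroup isomorphic to $\mathbb{Q}$ and therefore a subgroup $H\cong\mathbb{Z}$. I would carry out the whole argument inside this $H$.

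First I would isolate the hereditary observation. Suppose $H\le G$ and we are given subsets $A,B\subseteq H$ and matchings $f,g\colon A\to B$ (formed in $H$) with $\#A=\#B=m$, $f\neq g$ and $m_f=m_g$. Then the identical data exhibit a failure of order $m$ in $G$: for $a\in A$ the element $a+f(a)$ lies in $H$ and is the same whether computed in $H$ or in $G$, so the matching condition $a+f(a)\notin A$ is unchanged and $f,g$ remain matchings in $G$; moreover $m_f^{G}(x)=m_f^{H}(x)$ for $x\in H$ and $m_f^{G}(x)=0$ for $x\notin H$, with the same for $g$, so $m_f^{G}=m_g^{G}$ while still $f\neq g$. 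Thus it suffices to produce one failure at order $\infty$ inside $H\cong\mathbb{Z}$.

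For that I would simply invoke the $n=1$ instance already recorded in the Example after Theorem 4.2: take $A=2\mathbb{Z}$, $B=2\mathbb{Z}+1$ and set $f(2t)=2t+1$, $g(2t)=2t-3$. Both are bijections $A\to B$ and both satisfy the matching condition, since $2t+f(2t)=4t+1$ and $2t+g(2t)=4t-3$ are odd and hence lie outside $A$; the translation $y\mapsto y+1$ identifies the fiber $\{y:4y+1=s\}$ with $\{y:4y-3=s\}$ for every $s$, giving $m_f=m_g$; and $f\neq g$ with $\#A=\#B=\infty$. Transporting this example along the inclusion $\mathbb{Z}\cong H\le G$ via the hereditary observation yields the theorem.

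The only genuine obstacle is the hereditary step: one must state carefully that both the matching condition and the multiplicity function $m_f$ depend only on the set $A\cup B$ and the restricted addition, not on the ambient group. Once this is made precise the rest is a quotation of earlier material, and it is worth noting that divisibility is used \emph{only} through Theorem 4.3 to secure the embedding of $\mathbb{Q}$ (hence of $\mathbb{Z}$); the witnessing construction itself is the torsion-free $\mathbb{Z}$-example and never uses divisibility.
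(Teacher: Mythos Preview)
Your proof is correct and follows essentially the same route as the paper: use Theorem~4.3 to embed $\mathbb{Q}$ (hence $\mathbb{Z}$) into $G$, then exhibit the failure via the even/odd example $A=2\mathbb{Z}$, $B=2\mathbb{Z}+1$ with two shift matchings (the paper takes $g(2n)=2n+5$ rather than your $2t-3$, and verifies $m_f=m_g$ directly on $G$ instead of isolating the hereditary step, but these are cosmetic differences).
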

\noindent
\begin{proof}
 By Theorem 4.3, $\mathbb Q$ is embedded in $G$. Set $A:=\{2k:\; k\in\mathbb Z\}$ and $B:=\{2k+1:\; k\in \mathbb{Z}\}$ as subsets of $\mathbb Q$. Define the bijections $f, g: A\to B$ by $f(2n)=2n+1$ and $g(2n)=2n+5$. It is clear that $f$ and $g$ are matchings.  Now, if $x\in G\setminus\{4k+1:\; k\in\mathbb Z\}$, then $m_f(x)=m_g(x)=0$. On the other hand, if $x\in\{4k+1:\; k\in\mathbb Z\}$, then $m_f(x)=m_g(x)=1$ and then, in all cases $m_f=m_g$. 
\end{proof}

\noindent
\begin{cor}
By Theorem 4.1 and Theorem 4.4, if $G$ is an Abelian, torsion-free group  then $G$ fails to have the acyclic  matching property at order $\infty$.
\end{cor}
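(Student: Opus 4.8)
The plan is to exploit the fact that divisibility furnishes an exhaustive dichotomy on the class of Abelian groups, so that Theorem~4.1 and Theorem~4.4 between them cover every torsion-free case. First I would recall the definition: an Abelian group $G$ is \emph{divisible} if for every $x\in G$ and every positive integer $n$ there exists $y\in G$ with $ny=x$, and \emph{non-divisible} otherwise. Every Abelian group satisfies exactly one of these two conditions, so the partition into divisible and non-divisible groups is complete and the two cases are mutually exclusive.

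With this in hand, the argument is a two-line case analysis. Let $G$ be an arbitrary Abelian torsion-free group. If $G$ is non-divisible, then $G$ is simultaneously torsion-free and non-divisible, so it satisfies the hypotheses of Theorem~4.1, and we conclude that $G$ fails to have the acyclic matching property at order $\infty$. If instead $G$ is divisible, then $G$ is torsion-free and divisible, so it satisfies the hypotheses of Theorem~4.4, and the same conclusion follows. Since every torsion-free Abelian group lands in one of these two cases, the corollary holds in full generality.

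The only point that genuinely requires attention, and the closest thing to an obstacle here, is verifying that the two invoked theorems really do partition the situation with no gap: Theorem~4.1 handles precisely the non-divisible torsion-free groups, Theorem~4.4 handles precisely the divisible torsion-free groups, and these labels are exhaustive for torsion-free $G$. Once this is noted, there is no further computation to perform, since the entire content has been pushed into the earlier results; the corollary is a formal consequence of combining them along the divisibility dichotomy.
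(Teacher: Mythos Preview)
Your proof is correct and matches the paper's approach: the corollary is stated without a separate proof precisely because it is the immediate union of Theorem~4.1 and Theorem~4.4 along the divisible/non-divisible dichotomy, exactly as you argue.
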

\noindent
\example
 Two additive groups $\mathbb R$ and $\mathbb Q$ fails to have the acyclic  matching property at order $\infty$.

Now, our result regarding the connection of matching properties for Abelian groups.

\noindent
\begin{thm}
 Suppose $G$ is an Abelian group and $G\neq \mathbb{Z}_2$, $\mathbb{Z}_3$ and $\mathbb{Z}_5$. If $G$ has the finite acyclic matching property, then it fails to have the acyclic matching property at order $m$ for some $m\in \mathbb{N}\cup\{\infty\}$. 
\end{thm}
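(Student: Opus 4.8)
The plan is to reduce the statement, through Losonczy's classification, to the two families of groups whose failure has already been settled in this paper, and then to verify that excluding $\mathbb{Z}_2$, $\mathbb{Z}_3$ and $\mathbb{Z}_5$ is exactly what is needed to cover the prime-order case. First I would observe that the finite acyclic matching property is, by definition, stronger than the finite matching property: a group admitting at least one acyclic matching for each admissible pair $A,B$ certainly admits at least one matching. Hence the hypothesis forces $G$ to have the finite matching property, and Theorem 1.1 applies, so $G$ is either torsion-free or cyclic of prime order. This dichotomy is the backbone of the proof.

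If $G$ is torsion-free I would simply invoke Corollary 4.5, which guarantees that every Abelian torsion-free group fails to have the acyclic matching property at order $\infty$; taking $m=\infty$ disposes of this case. If instead $G\cong\mathbb{Z}_p$ for a prime $p$, then the assumption $G\neq\mathbb{Z}_2,\mathbb{Z}_3,\mathbb{Z}_5$ forces $p\geq 7$, whence the open interval $(2,p-2)$ contains an integer, for instance $k=3$. Theorem 3.1 then supplies subsets $A,B$ and distinct matchings $f,g\colon A\to B$ with $\#A=\#B=k$ and $m_f=m_g$, so $m=k$ works.

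The content of the theorem is thus less a computation than a correct assembly, and the one point demanding care is the nonemptiness of the range $2<k<p-2$ in Theorem 3.1. For $p\in\{2,3,5\}$ this interval is empty, so the prime-order branch would yield no admissible finite $m$; these three groups are genuine exceptions rather than deficiencies of the method, which is precisely why they are excluded from the hypothesis. Once $p\geq 7$ is guaranteed, Theorem 1.1, Theorem 3.1 and Corollary 4.5 combine with no further work.
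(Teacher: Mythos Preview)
Your proof is correct and follows essentially the same structure as the paper's own argument: reduce via Theorem~1.1 to the torsion-free and prime-cyclic cases, then quote Corollary~4.5 and an earlier $\mathbb{Z}_p$ result. The only difference is that the paper cites Theorem~2.1 (failure at order $(p-1)/2$) for the prime case, whereas you cite Theorem~3.1 (failure at any order $k$ with $2<k<p-2$); either choice works once $p\geq 7$.
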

We will see the proof of this theorem in  section 7.

\section{Linear version of acyclicity  for subspaces in a field extension}
Let $G$ be an Abelian group and $f$ and $g: A\to B$  be two matchings where $A$ and $B$ are non-empty finite subsets of $G$ and $m_f=m_g$. For any $x\in G$, define $A_x^f=\left\{a\in A:\; a+f(a)=x\right\}$, $A_x^g=\left\{a\in A:\; a+g(a)=x\right\}$, $\mathcal A_f=\left\{A_x^f:\; m_f(x)\neq0\right\}$ and $\mathcal A_g=\left\{A_x^g:\; m_g(x)\neq0\right\}$. It is clear that $\mathcal A_f$ and $\mathcal A_g$ are distinct decompositions for $A$ and $\#\mathcal A_f<\infty$, $\#\mathcal A_g<\infty$. Define the function $\varphi:A\to A$ by the following method:\\
Define  $\mathcal A_f=\left\{ A_{x_1}^f,A_{x_2}^f,\ldots,A_{x_m}^f\right\}$. Since $m_f=m_g$, then $\mathcal A_g=\left\{ A_{x_1}^g,A_{x_2}^g,\ldots,A_{x_m}^g\right\}$. Assume that $a_1\in A_{x_1}^f$, choose an arbitrary element $b_1$ of $A_{x_1}^g$ and put $\varphi(a_1)=b_1$. If $a_2$ is another element of $A_{x_1}^f$, choose another arbitrary element $b_2$ of $A_{x_1}^g\setminus \{b_1\}$ and put $\varphi(a_2)=b_2$.
We can continue this procedure to define $\varphi$ on $A_{x_1}^f$ and by the similar way, we can define the function $f$ on whole $A$ which is bijective and satisfies $a+f(a)=\varphi(a)+g(\varphi(a))$ for any $a\in A$.

Conversely, assume that $f$ and $g$ are two matchings from $A$ to $B$ and there exists a bijection $\varphi:A\to A$ for which $a+f(a)=\varphi(a)+g(\varphi(a))$, for any $a\in A$. We claim that $m_f=m_g$. Let  us $x$ be an arbitrary element of $G$. We have the following cases:\\
Case 1: If $x\in A$, according to the definition of matching, $m_f(x)=m_g(x)=0$.\\
Case 2: If $x\not\in A$, then $m_f(x)=\#\{a\in A:\; a+f(a)=x\}=\# \{a\in A:\; \varphi(a)+g(\varphi(a))=x\}=\#\{a\in A:\; a+g(a)=x\}=m_g(x)$. So, $m_f=m_g$, as claimed.

So we get the following theorem:

\noindent
\begin{thm}
 Let $A$, $B$, $f$ and $g$ be as above. Then, $m_f=m_g$ if and only if there exists a bijection $\varphi: A\to A$ such that $a+f(a)=\varphi(a)+g(\varphi(a))$, for any $a\in A$.
\end{thm}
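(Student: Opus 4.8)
The plan is to establish the two directions of the biconditional separately, using throughout the fiber decompositions $\mathcal A_f$ and $\mathcal A_g$ of $A$. The key observation is that the hypothesis $m_f=m_g$ is exactly the statement that $\#A_x^f=\#A_x^g$ for every $x\in G$, since by definition $m_f(x)=\#A_x^f$ and $m_g(x)=\#A_x^g$.

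For the forward implication, I would construct $\varphi$ one fiber at a time. Fix an $x$ with $m_f(x)\neq 0$; then $\#A_x^f=\#A_x^g\neq 0$, so there is a bijection from $A_x^f$ onto $A_x^g$, and I let $\varphi$ agree with such a chosen bijection on $A_x^f$. Because $f$ is a single-valued function, each $a\in A$ lies in exactly one fiber $A_x^f$, so the family $\{A_x^f\}_{x}$ partitions $A$; the same holds for $\{A_x^g\}_{x}$. Hence the locally defined bijections glue to a global bijection $\varphi:A\to A$. By construction, $a\in A_x^f$ forces $a+f(a)=x$ and $\varphi(a)\in A_x^g$ forces $\varphi(a)+g(\varphi(a))=x$, which gives the required identity $a+f(a)=\varphi(a)+g(\varphi(a))$.

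For the converse, I would compute $m_f(x)$ directly from the identity, splitting on whether $x\in A$. If $x\in A$, the matching condition $a+f(a)\notin A$ (and its analogue for $g$) forces $m_f(x)=m_g(x)=0$. If $x\notin A$, I would reindex the counting set $\{a\in A:\; a+f(a)=x\}$ through $\varphi$: since $a+f(a)=\varphi(a)+g(\varphi(a))$ and $\varphi$ is a bijection of $A$, this set is in bijection with $\{a\in A:\; a+g(a)=x\}$, whence $m_f(x)=m_g(x)$. Taking the two cases together yields $m_f=m_g$.

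The step that needs the most care is verifying that the fiber-wise bijections in the forward direction really assemble into a bijection of all of $A$, that is, that $\{A_x^f\}_{x}$ and $\{A_x^g\}_{x}$ are genuine partitions of $A$ rather than mere covers. Disjointness is immediate because $f$ and $g$ are functions, and the finiteness of $A$ makes the gluing routine; once this is pinned down, the remainder of both directions is bookkeeping.
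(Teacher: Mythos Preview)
Your proposal is correct and follows essentially the same approach as the paper: the paper constructs $\varphi$ fiber-by-fiber by matching elements of $A_{x_i}^f$ with elements of $A_{x_i}^g$ (using that $m_f=m_g$ forces equal fiber sizes), and for the converse it performs the identical case split on $x\in A$ versus $x\notin A$ together with the same reindexing through $\varphi$. Your write-up is slightly more explicit about why the fibers partition $A$, but the argument is the same.
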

By the aforementioned theorem, a natural generalization for the acyclic matching in vector spaces is inspired.  To see  this concept, we need to present some definitions from \cite{5}.

\noindent
\begin{definition}
 Let $K\subseteq L$ be a field extension and $A$, $B$ be $n$-dimensional $K$-subspaces of the field extension $L$. Let $\mathcal{A}=\{a_1,\ldots,a_n\}$ and $\mathcal B=\{b_1,\ldots,b_n\}$ be bases of $A$ and $B$, respectively. It is said $\mathcal A$ is \textit{matched} to $\mathcal B$ if
\[
a_ib\in A \;\Rightarrow \; b\in \langle b_1,\ldots,\hat{b}_i,\ldots,b_n\rangle
\]
for all $b\in B$ and $i=1,\ldots,n$, where $\langle b_1,\ldots,\hat{b}_i,\ldots,b_n\rangle$ is the hyperplane of $B$ spanned by the set $\mathcal B\setminus \{b_i\}$; moreover, it is stated that $A$ is \textit{matched } with $B$ if every basis $\mathcal A$ of $A$ can be matched to a basis $\mathcal B$ of $B$. It is said that $L$ has the \textit{linear matching property} if, for every $n\geq1$
and every $n$-dimensional subspaces $A$, $B$ of $L$ with $1\not\in B$, the subspace $A$ is matched with $B$. A \textit{strong matching} from $A$ to $B$ is a linear isomorphism $\varphi: A\to B$ such that any basis $\mathcal A$ of $A$ is matched to the basis $\varphi(\mathcal{A})$ of $B$.
\end{definition}
Now, we are in the situation  to give the linear version of acyclicity.

\noindent
\begin{definition}
 Let $K\subseteq L$ be a field extension and $A$ and $B$ be two $n$-dimensional $K$-subspaces in $L$ such that $n>1$. Let $f$, $g: A\to B$ be two strong matchings. We say $f$ is equivalent to $g$ and denote it by $f\sim g$ if there exists a linear isomorphism $\varphi: A\to A$ such that $af(a)=\varphi(a)g(\varphi(a))$, for any $a\in A$; moreover, we state that the strong matching $f: A\to B$ is \textit{linear acyclic matching} if for any strong matching $g: A\to B$, if $f\sim g$, then $f=cg$, for some $c\in K$. We say $K\subseteq L$ \textit{fails to have the linear acyclic matching} property at order $m\in \mathbb{N}$, if there exist $K$-subspaces $A$ and $B$ in $L$ and strong matchings $f$ and $g: A\to B$ such that $f\neq g$, $f\sim g$ and $\dim_KA=\dim_KB=m$.
\end{definition}
Eliahou and Lecouvey in \cite{5} proved the following theorems. The interested reader is also referred to \cite{1}.

\noindent
\begin{thm}
 Let $K\subset L$ be a field extension.  Then $K$ has the linear matching property if and only if $L$ contains no proper finite-dimensional extension over $K$.
\end{thm}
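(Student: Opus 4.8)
The plan is to prove the two implications separately; the engine on both sides is the linear analogue of the Cauchy--Davenport/Kneser addition theorem, which here plays the role that Cauchy--Davenport plays in the classical $\mathbb{Z}_p$ case. Throughout, for $K$-subspaces $S,T\subseteq L$ I write $ST$ for the $K$-span of all products $st$ with $s\in S$, $t\in T$, and I read the hypothesis ``$L$ contains no proper finite-dimensional extension over $K$'' as ``there is no field $F$ with $K\subsetneq F\subsetneq L$ and $\dim_K F<\infty$.''

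For the direction that produces a failure I would argue the contrapositive: given a field $F$ with $K\subsetneq F\subsetneq L$ and $d=\dim_K F<\infty$, I build subspaces $A,B$ with $1\notin B$ that are not matched. Pick $\gamma\in L\setminus F$ and a hyperplane $H$ of $F$ with $1\notin H$, and set $A=F$ and $B=H+K\gamma$, so that $\dim_K A=\dim_K B=d$ and $1\notin B$ (if $1=h+c\gamma$ then $c\gamma\in F$, forcing $c=0$ and $1=h\in H$, a contradiction). The crux is that, since $F$ is a field while $\gamma\notin F$, for any nonzero $a\in F$ and any $b=h+c\gamma\in B$ one has $ab\in A$ if and only if $c=0$, i.e.\ if and only if $b\in H$. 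Hence for every basis $\mathcal A$ of $A$ and every basis $\mathcal B=\{b_1,\dots,b_d\}$ of $B$, the matching condition would force $H\subseteq\bigcap_{i=1}^d\langle\mathcal B\setminus\{b_i\}\rangle=\{0\}$, contradicting $\dim_K H=d-1\ge 1$. Thus no basis of $A$ can be matched to any basis of $B$, so $A$ is not matched with $B$ and the linear matching property fails. This is the exact linear shadow of the group construction $A=H$, $B=(H\setminus\{0\})\cup\{g\}$ that kills matchings whenever $G$ has a proper nontrivial finite subgroup.

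For the converse, assume $L$ has no such intermediate finite subextension. Here I would invoke the linear Kneser theorem (Hou--Leung--Xiang; Eliahou--Lecouvey, \cite{5}): for finite-dimensional $S,T$ one has $\dim_K(ST)\ge\dim_K S+\dim_K T-\dim_K\mathrm{Stab}(ST)$, where $\mathrm{Stab}(ST)=\{x\in L:\; x\,ST\subseteq ST\}$ is a field containing $K$. Under the hypothesis this stabilizer is forced to be $K$ (or else $ST$ absorbs a subfield large enough to make the bound vacuous), so the sharp bound $\dim_K(ST)\ge\dim_K S+\dim_K T-1$ holds. I would then establish a linear Hall/transversal criterion of the following shape: a basis $\mathcal A$ of $A$ matches a basis of $B$ precisely when, for every subspace $W\subseteq B$, the space of $a\in A$ with $aW\subseteq A$ has dimension at most $\dim_K A-\dim_K W$. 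Feeding the Cauchy--Davenport inequality into this criterion verifies the Hall condition for every $\mathcal A$ whenever $1\notin B$, so $A$ is matched with $B$ and the linear matching property holds.

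I expect the converse to be the main obstacle, and specifically the passage from the additive inequality to an honest matching of bases. The linear Hall theorem is more delicate than its set-theoretic ancestor because the chosen representatives must be linearly independent rather than merely distinct, so the inductive step---peel off one basis vector of $A$ and control how $\dim_K(AB)$ and the candidate hyperplanes of $B$ drop---has to be run in tandem with the Kneser stabilizer analysis. The borderline case, where $\dim_K S+\dim_K T-1$ is attained and Kneser would permit a nontrivial stabilizing subfield, is exactly the place where the absence of an intermediate finite subextension is consumed; I would isolate that as a separate lemma before assembling the two directions into the stated equivalence.
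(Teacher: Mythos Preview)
The paper does not actually prove this statement: Theorem~5.4 is quoted verbatim from Eliahou and Lecouvey \cite{5} and invoked as a black box, so there is no in-paper argument to compare yours against.

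That said, your outline is essentially the Eliahou--Lecouvey proof. The failure direction you give (take $A=F$, $B=H\oplus K\gamma$ with $H$ a hyperplane of the intermediate field $F$ avoiding $1$ and $\gamma\notin F$) is exactly their construction, and your analysis of why no basis of $A$ can match is correct. For the converse you correctly identify the two ingredients they use: the linear Kneser/Hou--Leung--Xiang inequality, which under the no-intermediate-subfield hypothesis collapses to $\dim_K(ST)\ge\dim_K S+\dim_K T-1$, and a linear Hall-type criterion turning that dimension bound into an honest matched basis. Your caveat that the linear Hall step is the delicate one is accurate; in \cite{5} it is handled not by peeling off basis vectors one at a time but via a subspace-counting argument (the ``defect'' form of Hall), and that is where the inequality is actually consumed. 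So nothing in your plan is wrong, but since the present paper only cites the result, a full write-up would amount to reproducing the relevant portion of \cite{5} rather than filling a gap here.
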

\noindent
\begin{thm}
 Let $K\subset L$ be a field extension and $A$ and $B$ be $n$-dimensional $K$-subspaces distinct from $\{0\}$. There is a strong matching from $A$ to $B$ if and only if $AB\cap A=\{0\}$. In this case, any isomorphism $\varphi: A\to B$ is a strong matching.
\end{thm}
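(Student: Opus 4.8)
The plan is to read $AB$ as the set of products $\{ab : a\in A,\ b\in B\}$, so that $AB\cap A=\{0\}$ asserts precisely that no nonzero product of an element of $A$ with an element of $B$ lands in $A$. (This is the reading under which the statement is correct: with the span interpretation one can write down $2$-dimensional subspaces $A,B$ of a field that admit a strong matching and yet have $A$ meeting the $K$-span of the products, so the product-set reading is the intended one.) I will prove the backward implication together with the closing sentence first, and the forward implication second.

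For the backward direction, assume $AB\cap A=\{0\}$ and let $\varphi:A\to B$ be an arbitrary $K$-linear isomorphism; I claim $\varphi$ is a strong matching. Fix any basis $\mathcal A=\{a_1,\dots,a_n\}$ of $A$ and set $b_i=\varphi(a_i)$, so $\mathcal B=\{b_1,\dots,b_n\}$ is a basis of $B$. To verify the matching condition, take $b\in B$ and an index $i$ with $a_ib\in A$. Since $a_i\in A$ and $b\in B$, the product $a_ib$ lies in $AB$, whence $a_ib\in AB\cap A=\{0\}$; as $L$ is a field and $a_i\neq 0$ this forces $b=0\in\langle b_1,\dots,\widehat{b_i},\dots,b_n\rangle$. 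Thus $\mathcal A$ is matched to $\varphi(\mathcal A)=\mathcal B$, and as $\mathcal A$ was arbitrary, $\varphi$ is a strong matching. In particular a strong matching exists and \emph{every} isomorphism is one, which is exactly the final assertion of the theorem.

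For the forward direction, suppose $\varphi:A\to B$ is a strong matching; I must show that $a\in A\setminus\{0\}$, $b\in B$ and $ab\in A$ force $b=0$. The crux is that $a$ may be completed to a basis of $A$ in many ways. For any completion $\{a=a_1,a_2,\dots,a_n\}$, applying the matching condition at the index $1$ (with $b_i=\varphi(a_i)$) yields $ab\in A\Rightarrow b\in\langle\varphi(a_2),\dots,\varphi(a_n)\rangle$. As $(a_2,\dots,a_n)$ ranges over all completions of $a$ to a basis of $A$, the tuple $(\varphi(a_2),\dots,\varphi(a_n))$ ranges over all completions of the nonzero vector $\varphi(a)$ to a basis of $B$, so the hyperplane $H=\langle\varphi(a_2),\dots,\varphi(a_n)\rangle$ ranges over \emph{every} hyperplane of $B$ avoiding $\varphi(a)$. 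Hence
\[
b\in\bigcap\{\,H : H\text{ a hyperplane of }B,\ \varphi(a)\notin H\,\}.
\]

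It then remains to record the elementary fact that, for a nonzero vector $v=\varphi(a)$ of a finite-dimensional space, the intersection of all hyperplanes avoiding $v$ is $\{0\}$: given $w\neq 0$, if $w\notin\langle v\rangle$ one takes the kernel of a functional nonvanishing at both $v$ and $w$, while if $w\in\langle v\rangle\setminus\{0\}$ every hyperplane missing $v$ automatically misses $w$; in either case some admissible $H$ excludes $w$. Therefore $b=0$, as required. I expect the forward direction to be the main obstacle, and specifically the realization that one must invoke the strong matching hypothesis across \emph{all} bases simultaneously, rather than a single fixed basis, in order to trap $b$ inside every hyperplane avoiding $\varphi(a)$; the backward direction and the intersection lemma are then routine.
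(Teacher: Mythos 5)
There is nothing in the paper to compare against here: the statement is Theorem 5.5, which the authors quote from Eliahou and Lecouvey \cite{5} without proof (adding only a pointer to the erratum \cite{1}) and then use as a black box in the proofs of Theorems 6.1 and 6.2. Judged on its own, your proof is correct and complete. The backward direction is the expected vacuity argument: if no nonzero product of elements of $A$ and $B$ lies in $A$, then $a_ib\in A$ forces $a_ib=0$, hence $b=0$, which lies in every hyperplane $\langle b_1,\ldots,\hat{b}_i,\ldots,b_n\rangle$; this simultaneously gives existence and the closing clause that \emph{every} isomorphism is a strong matching. Your forward direction is sound, including the identification of the spans $\langle\varphi(a_2),\ldots,\varphi(a_n)\rangle$ with all hyperplanes of $B$ avoiding $\varphi(a)$, and your intersection lemma works over every field, since you only ever need a functional avoiding two proper subspaces of the dual (a union of two proper subspaces is never the whole space, with no field-size hypothesis); the edge case $n=1$ also checks out, the unique hyperplane being $\{0\}$. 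One remark: the intersection lemma can be bypassed by arguing contrapositively --- if $a,b\neq0$ and $ab\in A$, your functional argument directly produces one hyperplane $H$ of $B$ with $\varphi(a)\notin H$ and $b\notin H$ (when $b\in\langle\varphi(a)\rangle$ any $H$ avoiding $\varphi(a)$ works), and pulling a basis of $H$ back through $\varphi$ and appending $a$ yields a single basis of $A$ violating the matching condition, so no isomorphism is a strong matching.

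Your parenthetical on reading $AB$ as the product set rather than its span is not a quibble; it is precisely the issue behind the erratum \cite{1} that the paper cites, and the $2$-dimensional counterexample to the span reading that you assert does exist: take $K=\mathbb{Q}$, $L=\mathbb{Q}(x,y)$, $A=\langle 1,x\rangle$, $B=\langle 1-xy,\,y\rangle$. A coefficient check on $(\alpha+\beta x)\bigl(\gamma(1-xy)+\delta y\bigr)$ (the monomials $y$, $xy$, $x^2y$ must vanish) shows a product lies in $A$ only if it is zero, so by your backward direction every isomorphism $A\to B$ is a strong matching; yet $1\cdot(1-xy)+x\cdot y=1\in A$, so the span of the products meets $A$ nontrivially. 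Hence the stated equivalence is true exactly under your product-set reading, as you claim.
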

Now, our result regarding the connection of the linear matching properties for field extensions.

\noindent
\begin{thm}
 Let $K \subsetneqq L$ be a field extension admit the linear matching property and $\# K\geq 5$. Then it fails to have the linear acyclic matching property at order $m$, for some $m\in \mathbb{N}$.
\end{thm}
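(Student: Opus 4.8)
The plan is to produce, at order $m=2$, two strong matchings that are $\sim$-equivalent yet are not scalar multiples of each other; this is the linear incarnation of the identity $m_f=m_{f^{-1}}$ exploited in Section~3, where a matching containing a cycle of length greater than $2$ differs from its inverse while sharing its multiplicity function. I would transport that idea verbatim into the vector-space setting by taking $g=f^{-1}$.

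First I would cut down the ambient extension. Since $K\subsetneqq L$ has the linear matching property, Theorem~5.4 gives that $L$ has no proper finite-dimensional extension of $K$, so every $\theta\in L\setminus K$ is transcendental over $K$ (an algebraic $\theta$ would make $K(\theta)$ a proper finite extension inside $L$). Fixing such a $\theta$, the powers $1,\theta,\theta^2,\dots$ are $K$-linearly independent, and I set
\[
A:=\langle \theta,\theta^3\rangle_K,\qquad B:=A .
\]
Then $A\cdot A=\langle \theta^2,\theta^4,\theta^6\rangle_K$ shares no monomial with $A$, whence $AB\cap A=A^2\cap A=\{0\}$; by Theorem~5.5 every $K$-linear automorphism of $A$ is a strong matching $A\to A$, and $\dim_K A=\dim_K B=2>1$ as Definition~5.3 demands.

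The heart of the argument is the equivalence, which needs no real computation. For any $K$-automorphism $f$ of $A$ put $g:=f^{-1}$, again a strong matching, and take $\varphi:=f$ as the linear isomorphism in Definition~5.3. Commutativity of $L$ then yields
\[
\varphi(a)\,g(\varphi(a))=f(a)\,f^{-1}\!\big(f(a)\big)=f(a)\,a=a\,f(a)\qquad(a\in A),
\]
so $f\sim g$. It remains to force $f\neq cg$ for all $c\in K$; since $f=cf^{-1}$ is the same as $f^{2}=c\,\mathrm{id}_A$, I must pick $f$ whose square is not a scalar operator. Letting $f$ act diagonally on $\{\theta,\theta^3\}$ by $\theta\mapsto\lambda\theta$, $\theta^3\mapsto\mu\theta^3$ gives $f^{2}=\operatorname{diag}(\lambda^2,\mu^2)$, which is non-scalar exactly when $\lambda^2\neq\mu^2$. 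This is where $\#K\geq 5$ is used: in odd characteristic a field of order $q$ has $(q-1)/2\geq 2$ nonzero squares precisely when $q\geq 5$, and in characteristic two squaring is injective so any distinct nonzero $\lambda,\mu$ serve. For such $\lambda,\mu$ we obtain $f\neq cf^{-1}=cg$ for every $c\in K$ (and in particular $f\neq g$), while $f\sim g$, so $K\subsetneqq L$ fails to have the linear acyclic matching property at order $m=2$.

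The genuinely delicate step is the last one: exhibiting an automorphism with non-scalar square under the stated bound on $\#K$. Everything preceding it---transcendence of $\theta$, the disjoint-monomial verification of $A^2\cap A=\{0\}$, and the one-line check that $\varphi=f$ witnesses $f\sim f^{-1}$---is routine, so the only place the hypothesis $\#K\geq 5$ can (and must) be spent is in separating $\lambda^2$ from $\mu^2$, which fails only for the excluded small fields.
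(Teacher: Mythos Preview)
There is a genuine gap in your first reduction step. Theorem~5.4 says that $L$ contains no \emph{proper} finite-dimensional extension of $K$, i.e.\ no field $M$ with $K\subsetneq M\subsetneq L$ and $[M:K]<\infty$; it does not preclude $L$ itself from being finite over $K$. For instance, if $[L:K]=p$ is prime then $K\subsetneq L$ has the linear matching property, yet every $\theta\in L\setminus K$ is algebraic with $K(\theta)=L$. In that situation the powers $1,\theta,\theta^2,\dots$ cease to be independent past degree $p-1$, and your disjoint-monomial verification of $A^2\cap A=\{0\}$ for $A=\langle\theta,\theta^3\rangle$ needs $\theta,\theta^2,\theta^3,\theta^4,\theta^6$ to be $K$-independent, hence $[L:K]\geq 7$; for smaller finite degree it can simply fail. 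The paper deals with this by an explicit case split: Theorem~6.2 covers the transcendental situation (essentially your argument, stated for all $m$), Theorem~6.1 treats finite extensions with no proper intermediate subfield, and Remark~6.3 bundles the two before Theorem~7.2 invokes it.

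Apart from this missing case, your route coincides with the paper's: the same subspace $A_m=\langle a,a^3,\dots,a^{2m-1}\rangle$ (you take $m=2$), the same appeal to Theorem~5.5, and the same $f\sim f^{-1}$ mechanism via $\varphi=f$. Your direct choice of a diagonal $f$ with $\lambda^2\neq\mu^2$ is a tidy way to bypass the paper's secondary case split on whether $f^2=\mathrm{id}_A$, and your square-counting use of $\#K\geq 5$ is correct, but none of this repairs the omission of the algebraic case.
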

We will see the proof of this theorem in section 7.

\section{The linear acyclicity of a given order}
In this section, we study  the linear acyclicity for  finite field extensions.

\noindent
\begin{thm}
 Let $K\subsetneq L$ be a field extension with $[L: K]=n$, $\# K\geq5$ and no proper intermediate subfield. Then $K\subset L$ fails to have the linear acyclic matching property at order $m$, for any $1\leq m \leq(n+1)/4$.
\end{thm}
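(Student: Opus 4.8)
The plan is to exhibit, for each order $m$ in the stated range, two distinct strong matchings $f,g\colon A\to B$ that are equivalent but not scalar multiples of one another. The strategy mirrors the group-theoretic constructions earlier in the paper: there $m_f=m_g$ with $f\neq g$ was achieved by composing a given matching with a nontrivial permutation of the cycle structure, and Theorem~6.3 (the linear analogue of Theorem~5.3) tells us that $m_f=m_g$ corresponds exactly to the existence of a bijection $\varphi$ with $a+f(a)=\varphi(a)+g(\varphi(a))$; the condition $f\sim g$ in the linear setting is the direct translate of this, with the isomorphism $\varphi\colon A\to A$ replacing the bijection. So the first step is to produce a suitable pair of subspaces $A,B$ of dimension $m$ with $AB\cap A=\{0\}$, so that by Theorem~6.5 \emph{every} isomorphism $A\to B$ is automatically a strong matching; this converts the problem of building strong matchings into the much easier problem of building linear isomorphisms.

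Next I would fix a basis $\{a_1,\dots,a_m\}$ of $A$ and $\{b_1,\dots,b_m\}$ of $B$ and define $f$ to be the isomorphism sending $a_i\mapsto b_i$ and $g$ to be a second isomorphism obtained by scaling the basis vectors independently, say $f(a_i)=b_i$ and $g(a_i)=c_i b_i$ for scalars $c_i\in K^\times$ chosen so that the tuple $(c_1,\dots,c_m)$ is \emph{not} a constant tuple—this is exactly where the hypothesis $\#K\geq 5$ buys enough distinct nonzero scalars to force $g\neq cf$ for every $c\in K$. To verify $f\sim g$ I would exhibit an explicit linear automorphism $\varphi$ of $A$ realizing the equivalence relation $af(a)=\varphi(a)g(\varphi(a))$; since both $f$ and $g$ are diagonal with respect to the chosen bases, $\varphi$ can itself be taken diagonal, $\varphi(a_i)=d_i a_i$, reducing the equivalence condition to a system of scalar equations in the $d_i$, $c_i$, and the structure constants of the products $a_i b_j$ inside $L$. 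The heart of the matter is to choose $A$, $B$, and the bases so that this scalar system is solvable with $\varphi$ a genuine automorphism (in particular all $d_i\neq 0$).

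The main obstacle I expect is arranging the products $a_i b_i$ to interact correctly: the equation $a_i f(a_i)=\varphi(a_i)g(\varphi(a_i))$ unwinds to $a_i b_i = d_i^2 c_i\, a_i b_i$ in $L$, which, because $a_i b_i\neq 0$, forces $d_i^2 c_i=1$, i.e. $c_i$ must be a square in $K$ and $d_i=\pm c_i^{-1/2}$. This is the real constraint, and it is why the theorem is stated with $m\leq (n+1)/4$ rather than up to $n$: the subspaces $A$ and $B$ must be chosen inside the $n$-dimensional $L$ so that the $m$ products $a_ib_i$ are themselves independent enough to keep the matchings strong (via $AB\cap A=\{0\}$), and counting dimensions for $A$, $B$, and $AB$ forces $4m\le n+1$ roughly. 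Concretely, since $L$ has no proper intermediate subfield, I would pick a generator $\theta$ of $L$ over $K$ and take $A$ and $B$ to be spanned by suitable disjoint blocks of powers of $\theta$, so that $\{a_ib_i\}$ lands in a further block of powers and the $AB\cap A=\{0\}$ condition is a transparent statement about non-overlapping ranges of exponents. Once the exponent bookkeeping confirms the three blocks fit inside degree $n$ precisely when $m\le (n+1)/4$, choosing each $c_i$ to be a nonzero square in $K$ with the $c_i$ not all equal completes the construction and gives $f\not= cg$ for all $c\in K$ while $f\sim g$, as required.
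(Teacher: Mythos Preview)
Your construction has a genuine gap: the equivalence $f\sim g$ demands $af(a)=\varphi(a)g(\varphi(a))$ for \emph{every} $a\in A$, and the map $a\mapsto af(a)$ is quadratic, not linear, so verifying it on basis vectors is not enough. With your diagonal choices $f(a_i)=b_i$, $g(a_i)=c_ib_i$, $\varphi(a_i)=d_ia_i$, plug in $a=a_i+a_j$ for $i\neq j$ and expand: the cross terms give
\[
a_ib_j+a_jb_i \;=\; d_id_j\bigl(c_j\,a_ib_j+c_i\,a_jb_i\bigr).
\]
If $A$ and $B$ are disjoint blocks of powers of a primitive element $\theta$ as you propose, the products $a_ib_j$ are distinct powers of $\theta$ of degree $<n$ and hence linearly independent, so the displayed equation forces $d_id_jc_i=d_id_jc_j=1$, whence $c_i=c_j$ for all $i,j$. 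Thus your scheme can only produce $g=cf$, the very case you need to avoid. (Incidentally, the paper's definition of ``fails at order $m$'' only asks for $f\neq g$, not $f\neq cg$ for all $c$, so your scalar worry is moot---but the quadratic obstruction is fatal regardless.)

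The paper sidesteps this entirely. It takes $A=B=\langle a,a^3,\ldots,a^{2m-1}\rangle$ for a primitive $a$; then $A^2$ sits in the span of even powers $a^2,\ldots,a^{4m-2}$, so $A\cap A^2=\{0\}$ precisely when $4m-2\le n-1$, i.e.\ $m\le(n+1)/4$. By Theorem~5.5 every isomorphism $A\to A$ is a strong matching. Now pick any strong matching $f$ and set $g=f^{-1}$, $\varphi=f$: the equivalence is the \emph{identity}
\[
\varphi(a)\,g(\varphi(a))=f(a)\cdot f^{-1}\!\bigl(f(a)\bigr)=f(a)\cdot a=af(a),
\]
valid for every $a\in A$ with no basis computation at all. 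If $f^2=\mathrm{id}$ (so $f=g$), replace $g$ by $h=c^{-2}f$ and $\varphi$ by $cf$ for some $c\in K$ with $c^2\notin\{0,1\}$; this is where $\#K\ge5$ enters. The trick is that taking $g=f^{-1}$ (or a scalar multiple) makes the equivalence hold \emph{functorially}, avoiding the bilinear cross-terms that wreck the diagonal approach.
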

\noindent
\begin{proof}
 Choose $m\in \mathbb N$ and $a \in L\setminus K$ for which $m\leq (n+1)/4$. Set $A_m:=\langle a, a^3, \ldots, a^{2m-1}\rangle$. Then $A_m\cap A_m^2=\{0\}$, because $K(a)=L$, for any $A\in L \setminus K$. Using Theorem 5.5, there exists a strong matching $f_m: A_m\to A_m$. Next, set $g_m:=f_m^{-1}$. One more time using Theorem 5.5, follows that $f_m^{-1}$ is a strong matching. Now, if $f_m\circ f_m\neq id_{A_m}$, then $f_m\neq g_m$ and $f_m\sim g_m$. On the other hand, if $f_m\circ f_m=id_{A_m}$, choose $c\in K$ such that $c^2\not\in\{0,1\}$. Set $h_m:=c^{-2}g_m$, then $h_m$ is a strong matching. We claim $f_m\sim h_m$. In order to prove, define $\varphi_m:=cf_m$. We get  $af_m(a)=\varphi_m(a)h_m(\varphi_m(a))$, for any $a\in A_m$. This  tells us  $f_m\sim h_m$, as claimed. 
\end{proof}

\noindent
\begin{thm}
 Let $K\subset L$ be a purely transcendental extension. Then, it fails to have the linear acyclic matching property at order $m$, for any $m\in \mathbb{N}$.
\end{thm}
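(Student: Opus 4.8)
The plan is to reuse the construction behind Theorem 6.1, now taking advantage of the fact that in a purely transcendental extension the required linear independence of monomials is automatic, so a single family of subspaces works at every order $m$ simultaneously and without appealing to ``$K(a)=L$''. Since $K\subset L$ is purely transcendental, I may pick an element $t\in L$ from a transcendence basis; then $t$ is transcendental over $K$, so the powers $1,t,t^2,\ldots$ are $K$-linearly independent. Fix $m\in\mathbb N$ and set $A:=\langle t,t^3,t^5,\ldots,t^{2m-1}\rangle$, a $K$-subspace of dimension $m$, and take $B:=A$.

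First I would verify the hypothesis of Theorem 5.5 in the form $A^2\cap A=\{0\}$. The product space $A^2$ is spanned by the elements $t^{(2i-1)+(2j-1)}=t^{2(i+j-1)}$ with $1\le i,j\le m$, all of which are \emph{even} powers of $t$, whereas $A$ is spanned by \emph{odd} powers of $t$. By linear independence of distinct monomials in the transcendental $t$, no nonzero element can be written both as an even-degree and as an odd-degree $K$-combination, so $A^2\cap A=\{0\}$. Theorem 5.5 then provides a strong matching $A\to A$ and, more usefully, guarantees that \emph{every} $K$-linear automorphism of $A$ is a strong matching.

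Next I would exhibit two distinct but equivalent strong matchings, imitating the $f\sim f^{-1}$ device from Theorem 6.1. Choose a $K$-linear automorphism $f$ of $A$ with $f^2\ne\mathrm{id}_A$ and set $g:=f^{-1}$; by the last assertion of Theorem 5.5 both $f$ and $g$ are strong matchings, and $f\ne g$ exactly because $f^2\ne\mathrm{id}_A$. To see $f\sim g$, use $\varphi:=f$ as the required linear isomorphism $A\to A$: for every $a\in A$, commutativity of multiplication in $L$ gives $\varphi(a)\,g(\varphi(a))=f(a)\,f^{-1}(f(a))=f(a)\,a=a\,f(a)$, which is precisely the defining relation of $f\sim g$. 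Hence $K\subset L$ fails to have the linear acyclic matching property at order $m$, and since $m\in\mathbb N$ was arbitrary the theorem follows.

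The one point that needs care, and that I expect to be the only real obstacle, is the existence of a non-involutive automorphism $f$. For $m\ge2$ this is free, since $GL_m(K)$ is non-abelian while any group in which every nonidentity element is an involution is abelian, so some automorphism has order greater than $2$. For $m=1$, however, $f$ is forced to be a scalar $a\mapsto\lambda a$, and $f^2\ne\mathrm{id}_A$ demands $\lambda\in K^{\ast}$ with $\lambda^2\ne1$; this is exactly where a lower bound on $\#K$ (compare the hypothesis $\#K\ge5$ of Theorem 6.1) enters, and the argument should be read with that proviso at order $1$. Everything else is dictated by Theorem 5.5 together with the elementary identity $f\sim f^{-1}$.
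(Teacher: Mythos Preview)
Your proof is correct and follows essentially the same approach as the paper: the same subspace $A_m=\langle t,t^3,\ldots,t^{2m-1}\rangle$, the same appeal to Theorem~5.5, and the same $f\sim f^{-1}$ device. Your one refinement---selecting $f$ non-involutive via the non-abelianness of $GL_m(K)$ rather than falling back on the scalar trick of Theorem~6.1---is a minor simplification, and since Definition~5.3 already restricts to dimension $n>1$, your caution about the case $m=1$ is unnecessary.
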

\noindent
\begin{proof}
 Let $a$ be an element of $L\setminus\{0,1\}$ and set $A_m:=\langle a, a^3, \ldots, a^{2m-1}\rangle$. Then $A_m\cap A_m^2=\{0\}$ and by Theorem 5.5, there exists a  strong matching $f_m$ from $A_m$ to $A_m$. By the same method in the previous theorem we can conclude that $K\subset L$ fails to have the acyclic linear matching property at order $m$, for any $m\in\mathbb N$.
\end{proof}

\noindent
\remark
 If a  field extension $K\subseteq L$  has no finite-dimensional proper intermediate field extension and $\# K\geq5$. Then, it fails to have the acyclic matching property at order $m$, for some $m\in\mathbb N$.

\noindent
\begin{proof}
  This is a direct consequence of Theorems 6.1 and 6.2. 
\end{proof}

\section{Main results}
\noindent
\begin{thm}
 Suppose $G$ is an Abelian group and $G\neq\mathbb Z_2,\mathbb{Z}_3$ and $\mathbb{Z}_5$. If $G$ has the finite acyclic matching property, then it fails to have the acyclic matching property at order $m$ for some $m\in\mathbb N\cup\{\infty\}$.
\end{thm}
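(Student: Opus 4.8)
The plan is to reduce the statement to the classification of Theorem 1.1 and then to quote the failure results already established for the two families of groups that arise. First I would observe that the hypothesis is stronger than it looks: since every acyclic matching is in particular a matching, a group enjoying the finite acyclic matching property automatically has the finite matching property. Theorem 1.1 then pins down $G$ completely — it must be either torsion-free or cyclic of prime order $\mathbb{Z}_p$. Thus the proof splits into exactly these two cases, and in each it suffices to produce a single order $m\in\mathbb{N}\cup\{\infty\}$ at which acyclicity fails.

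For the torsion-free case I would appeal directly to Corollary 4.5, which asserts that every Abelian torsion-free group fails to have the acyclic matching property at order $\infty$. Taking $m=\infty$ closes this case with no additional work.

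For the case $G=\mathbb{Z}_p$ I would first exploit the exclusion $G\neq\mathbb{Z}_2,\mathbb{Z}_3,\mathbb{Z}_5$: together with the primality of $p$ this forces $p>5$. Theorem 3.1 then yields failure of the acyclic matching property at every order $k$ with $2<k<p-2$, and since $p>5$ this range contains an integer (for instance $k=(p-1)/2$), so I may take $m=(p-1)/2$. The two cases together establish the theorem.

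Because the argument is essentially an assembly of earlier results, I do not anticipate a genuine obstacle. The one step that must be handled with care is the reduction via Theorem 1.1, and in particular the verification that the three excluded groups are precisely the exceptions: for $p\in\{2,3,5\}$ the interval $(2,p-2)$ contains no integer, so Theorem 3.1 produces no failing order — which is exactly why $\mathbb{Z}_2$, $\mathbb{Z}_3$ and $\mathbb{Z}_5$ must be removed from the hypothesis.
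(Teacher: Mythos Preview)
Your proposal is correct and follows essentially the same route as the paper: reduce via Theorem~1.1 to the torsion-free and $\mathbb{Z}_p$ cases, then invoke Corollary~4.5 and a failure result for $\mathbb{Z}_p$ with $p>5$. The only cosmetic difference is that the paper cites Theorem~2.1 (failure at order $(p-1)/2$) directly, whereas you cite the more general Theorem~3.1 and then specialize to $m=(p-1)/2$; the logic is identical.
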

\noindent
\begin{proof}
 Assume $G$ has the finite acyclic matching property. Then $G$ has the finite matching property. Using Theorem 1.1, $G$ is cyclic of prime order or torsion-free. Invoking  Corollary 4.5 and Theorem 2.1, $G$ fails to have the acyclic matching property at order $m$ for some $m\in\mathbb N\cup\{\infty\}$.
\end{proof}

\noindent
\begin{thm}
 Let $K\subsetneqq  L$ be a field extension admit  the linear matching property and $\# K\geq5$. Then it fails to have the linear acyclic matching property at order $m$, for some $m\in \mathbb{N}$.
\end{thm}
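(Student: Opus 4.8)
The plan is to run the same two-step reduction that proved Theorem 7.1, with Theorem 5.4 playing the role of Theorem 1.1 and the constructions of Section 6 playing the role of the group-theoretic failures. First I would feed the hypothesis that $K\subseteq L$ admits the linear matching property into Theorem 5.4, which tells us that $L$ contains no proper finite-dimensional extension over $K$. I would then recast this structural fact in the elementwise form that is actually convenient for building subspaces, namely that every $a\in L\setminus K$ is transcendental over $K$: an algebraic $a\notin K$ would make $K(a)$ a proper finite-dimensional intermediate extension inside $L$, contradicting Theorem 5.4.

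Second, because $K\subsetneqq L$ I can pick some $a\in L\setminus K$, which by the previous step is transcendental, so the powers $1,a,a^{2},a^{3},\ldots$ are linearly independent over $K$. Fixing any $m\geq 2$ and setting $A_m=\langle a,a^{3},\ldots,a^{2m-1}\rangle$, the product space $A_m^{2}$ is spanned by the even powers $a^{2},a^{4},\ldots,a^{4m-2}$ while $A_m$ is spanned by odd powers; linear independence of all these distinct powers of a transcendental element forces $A_m\cap A_m^{2}=\{0\}$, and $\dim_K A_m=m$. Theorem 5.5 then supplies a strong matching $f_m: A_m\to A_m$ and guarantees that $g_m:=f_m^{-1}$ is again a strong matching. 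I would then reproduce verbatim the endgame of Theorem 6.1: if $f_m\circ f_m\neq \mathrm{id}_{A_m}$ then $f_m\neq g_m$ while $f_m\sim g_m$; and if $f_m\circ f_m=\mathrm{id}_{A_m}$ I choose $c\in K$ with $c^{2}\notin\{0,1\}$ (available because $\#K\geq5$) and set $h_m=c^{-2}g_m$, so that $\varphi_m=cf_m$ witnesses $f_m\sim h_m$ with $f_m\neq h_m$. In either case $K\subseteq L$ fails to have the linear acyclic matching property at order $m$.

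The step I expect to be the crux — and the reason this is not a one-line corollary of Theorems 6.1 and 6.2 — is the bridge from the linear matching property to a single usable transcendental element. Theorem 6.1 assumes a finite extension with no proper intermediate subfield, and Theorem 6.2 assumes a purely transcendental extension, yet the linear matching property guarantees neither: over an algebraically closed $K$, for example, the function field of a positive-genus curve has every nonconstant element transcendental but is far from purely transcendental. The resolution, which I would emphasize, is that neither of those global hypotheses is actually used in the constructions — the argument only needs that the one chosen element $a$ is transcendental, so that $1,a,\ldots,a^{4m-2}$ are independent and $A_m\cap A_m^{2}=\{0\}$. Thus the honest content is that Theorem 5.4 always delivers such an $a$, after which the construction of Theorem 6.2 (equivalently, Remark 6.3) applies to that single element and produces the desired failure for some $m\in\mathbb{N}$.
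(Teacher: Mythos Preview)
Your overall two-step reduction is the paper's own: invoke Theorem~5.4, then feed the conclusion into the Section~6 constructions (packaged there as Remark~6.3). There is, however, a genuine gap in your first step. The claim that every $a\in L\setminus K$ is transcendental over $K$ does not follow from Theorem~5.4: a finite extension $K\subsetneq L$ with no proper intermediate subfield---for instance $\mathbb{Q}\subset\mathbb{Q}(\sqrt[7]{2})$, or $\mathbb{F}_{5}\subset\mathbb{F}_{5^{7}}$---does have the linear matching property, yet every element of $L$ is algebraic over $K$. Your implication ``algebraic $a\notin K$ makes $K(a)$ a proper finite-dimensional \emph{intermediate} extension'' breaks exactly when $K(a)=L$; in that situation there is no transcendental element and your subspace $A_m=\langle a,a^{3},\ldots,a^{2m-1}\rangle$ cannot be built as described. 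The paper handles this case by routing through Theorem~6.1, so the repair is simply to split on whether $[L:K]$ is finite before passing to a transcendental element.

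That said, your final paragraph makes a point the paper glosses over. When $[L:K]=\infty$ the extension need not be purely transcendental (your positive-genus function field example is apt), so Remark~6.3's appeal to Theorem~6.2 is not literally justified. Your observation that the construction requires only a single transcendental $a$, not global pure transcendence, is the right repair for that half of the dichotomy: in the infinite-degree case such an $a$ always exists, since any algebraic $a\in L\setminus K$ would give a proper finite-dimensional intermediate field $K(a)\subsetneq L$. So your argument and the paper's each patch one case the other leaves loose; combining them---finite degree via Theorem~6.1, infinite degree via one transcendental element and the endgame you wrote out---gives a complete proof.
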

\noindent
\begin{proof}
 If $K\subset L$ has the linear matching property, so Theorem 5.4 yields  it has no proper finite-dimensional $K$-subspaces and by Remark 6.3, $K\subset L$ fails to have the acyclic matching property at order $m$, for some $m\in \mathbb{N}$.
\end{proof}

\textbf{Acknowledgement:} The authors are thankful to Professors Noga Alon and Saieed Akbari for providing useful comments and discussions.


\begin{thebibliography}{}{\small
\bibitem{1}
S. Akbari, M. Aliabadi, Erratam to: Matching subspaces in a field extension, preprint.

\bibitem{2}
M. Aliabadi, M. R. Darafsheh, On maximal and minimallinear matching property, {\it Algebra and Discrete Mathematics}. Volume {\bf 15} (2013). Number 2. pp. 174-178.

\bibitem{3}
W. W. Adams, L. J. Goldstein, {\it Introduction to Number Theory}. Prentice-Hall, Inc, 1976.

\bibitem{4}
N. Alon, C. K. Fan, D. Kleitman, and J. Losonzcy, Acyclic matchings, {\it Adv. Math}. {\bf 122} (1996), 234-236.

\bibitem{5}
S. Eliahou, C. Lecouvey, Mathching subspaces in a field extension, {\it J. Algebra}. 324 (2010), 3420-3430.

\bibitem{6}
S. Eliahou, C. Lecouvey, Mathching in arbitrary groups, {\it Adv in Appl. Math}. {\bf 40} (2008), 219-224.

\bibitem{7}
C. K. Fan, J. Losonzcy, Matchings and canonical forms for symmetric tensors, {\it Adv. Math}. {\bf 117} (1990), 228-238.

\bibitem{8}
Y. O. Hamidoune, {\it On group bijection $\varphi$ with $\phi(B)=A$ and $\forall a\in B$, $a\varphi(a)\not\in A$}, preprint, arXiv:0812.2522 v1 [math. Co].

\bibitem{9}
T. W. Hungerford, {\it Algebra}, New York: Hdt, Rinehart and Winston, 1974.

\bibitem{10}
J. Losonzcy, On matchings in groups, {\it Adv. In Appl, Math}. {\bf 20} (1998), 385-391.

\bibitem{11}
J. Losonzcy, {\it Combinatorial aspects of theory of canonical forms}, B. A., Summa Cum Laude, New York University (1989).

\bibitem{12}
E. K. Wakeford, On canonical forms, {\it Proc. London Math, Soc}. {\bf 18} (1918-1919), 403-410.

\bibitem{13}
R. Zaare-Nahandi, Perfect matchings in groups, an approach via Grobner basis, {\it Southeast Asian Bulletin of Mathematics}, Vol {\bf 30} (2006), 341-345. 
}
\end{thebibliography}
\end{document}